\newtheorem{theorem}{Theorem}[section]
\newtheorem{lemma}[theorem]{Lemma}
\newtheorem{proposition}[theorem]{Proposition}
\theoremstyle{definition}
\newtheorem{assumption}[theorem]{Assumption}
\newtheorem{remark}[theorem]{Remark}
\numberwithin{equation}{section}
 \theoremstyle{plain}    
 \numberwithin{equation}{section} 
 \numberwithin{figure}{section} 
 \theoremstyle{plain}    
 \theoremstyle{plain}    
 \theoremstyle{remark}    
 \newtheorem*{acknowledgement*}{Acknowledgement} 
\newcommand{\cF}{{\mathcal F}}
\newcommand{\cG}{{\mathcal G}}
\newcommand{\cH}{{\mathcal H}}
\newcommand{\te}{{\theta}}
\newcommand{\Om}{{\Omega}}
\newcommand{\om}{{\omega}}
\newcommand{\ve}{{\varepsilon}}
\newcommand{\del}{{\delta}}
\newcommand{\Del}{{\Delta}}
\newcommand{\Gam}{{\Gamma}}
\newcommand{\vp}{{\varpi}}
\newcommand{\io}{{\iota}}
\newcommand{\up}{{\upsilon}}
\newcommand{\Up}{{\Upsilon}}
\newcommand{\sig}{{\sigma}}
\newcommand{\al}{{\alpha}}
\newcommand{\be}{{\beta}}
\newcommand{\ka}{{\kappa}}
\newcommand{\Sq}{{\large\square}}
\newcommand{\bbN}{{\mathbb N}}
\newcommand{\bbR}{{\mathbb R}}
\newcommand{\bbZ}{{\mathbb Z}}
\newcommand{\bt}{{\bf t}}
\newcommand{\bd}{{\bf d}}
\newcommand{\bfe}{{\bf e}}
\begin{document}
\title[]{A nonconventional invariance principle\\
 for random fields}%
 \vskip 0.1cm 
 \author{ Yuri Kifer\\
 \vskip 0.1cm
Institute of Mathematics\\
The Hebrew University of Jerusalem}%
\email{kifer\@@math.huji.ac.il}
\address{Institute of Mathematics, Hebrew University, Jerusalem 91904,\linebreak
 Israel}

\thanks{ }
\subjclass[2000]{Primary: 60F17 Secondary: 60G60}%
\keywords{random fields, limit theorems, mixing.}%
\dedicatory{  }
 \date{\today}
\begin{abstract}\noindent
In \cite{KV} we obtained a nonconventional invariance principle (functional 
central limit theorem) for sufficiently fast mixing stochastic processes with
discrete and continuous time. In this paper we derive a nonconventional 
invariance principle for sufficiently well mixing random fields. 
\end{abstract}
\maketitle
\markboth{Yu.Kifer}{Random fields} 
\renewcommand{\theequation}{\arabic{section}.\arabic{equation}}
\pagenumbering{arabic}

\section{Introduction}\label{sec1}\setcounter{equation}{0}

Nonconventional ergodic theorems (see \cite{Fu}) known also after \cite{Be}
as polinomial ergodic theorems studied the limits
 of expressions having the 
form $1/N\sum_{n=1}^NT^{q_1(n)}f_1\cdots T^{q_\ell (n)}f_\ell$ where $T$ is a
weakly mixing measure preserving transformation, $f_i$'s are bounded measurable
functions and $q_i$'s are polynomials taking on integer values on the integers.
Originally, these results were motivated by applications to multiple recurrence
for dynamical systems taking functions $f_i$ being indicators of some measurable
sets. Later such results were extended to the case when $q_i$'s are
polinomials on $\bbZ^\nu$ (see \cite{Le}) and to some $\bbZ^\nu$ actions
(see \cite{Au}).

Using the language of probability this kind of results may be called 
nonconventional laws of large numbers and as a natural follow up we arrived at
the invariance principle (functional central limit theorem) in \cite{KV} 
showing convergence in distribution to Gaussian processes for expressions
 of the form
\begin{equation}\label{1.1}
1/\sqrt{N}\sum_{0\leq n\leq Nt}\big(F\big(X(q_1(n)),..., X(q_\ell(n))\big)
-\bar F\big)
\end{equation}
where $X(n),n\geq 0$ is a sufficiently fast $\al,\rho$ or $\psi$-mixing 
vector valued process with some moment conditions and stationarity properties,
$F$ is a continuous function with polinomial growth and certain regularity
properties, $\bar F=\int Fd(\mu\times\cdots\times\mu)$, $\mu$ is the 
distribution of each $X(n)$, $q_j(n)=jn,\, j\leq k\leq\ell$ and $q_j,j=k+1,
...,\ell$ are positive
functions taking on integer values on integers with some growth conditions 
which are satisfied, for instance, when $q_i$'s are polynomials of growing 
degrees. 

The goal of this paper is to prove an invariance principle type result when
$n\in\bbZ^\nu$ is multidimensional. This can be done either by considering
functions $q_i:\bbZ^\nu\to\bbZ_+$ with $X(n),n\geq 0$ being again a vector
valued stochastic process or, more generally, considering maps
$q_i:\bbZ^\nu\to\bbZ^\nu$ taking now $X(n),n\in\bbZ^\nu$ to be a vector 
valued random field which will be our setup in this paper. Namely, for
$t=(t_1,...,t_\nu)\in [0,1]^\nu$ and a positive integer $N$ we  consider 
expressions of the form
\begin{equation}\label{1.2}
\xi_N(t)=N^{-\nu/2}\sum_{n=(n_1,...,n_\nu):0\leq n_i\leq Nt_i\,\forall i}\big(
F\big(X(q_1(n)),..., X(q_\ell(n))\big)-\bar F\big)
\end{equation}
where $X(n),n\in\bbZ^\nu$ is
a sufficiently well mixing vector valued random field, with some moment 
conditions and stationarity properties, $F$ and $\bar F$ are similar to above,
$q_j(n)=jn,\, j\leq k$ and $q_i:\bbZ^\nu\to\bbZ^\nu,\, i=k+1,...,\ell$ map
$\bbZ_+^\nu=\{ n=(n_1,...,n_\nu)\in\bbZ^\nu:\, n_i\geq 0,\, i=1,...,\nu\}$
into itself. Assuming some growth conditions of $|q_i|,\, i>k$ in $|n|$ we 
will show that the random field $\xi_N(t)$ converges in distribution to a
Gaussian random field on $[0,1]^\nu$.

In \cite{KV} we were able to obtain the latter result for one dimensional $n$
relying on martingale approximations and martingale limit theorems but for 
random fields this machinery is not readily available. Still, we are able
to combine some of mixingale technique from \cite{ML1} and \cite{ML2} together 
with an appropriate grouping of summands in (\ref{1.2}) in order to obtain both 
convergence of finite dimensional distributions and the tightness of infinite 
dimensional ones.
Other known methods which work successfully when proving limit theorems for 
random fields (see, for instance, \cite{Bo}, \cite{BS}, \cite{BS2} and 
\cite{Su}  ) rely
one way or another on characteristic functions (or other devices based on
weak dependence) which are hard to deal with in the nonconventional setup 
as demonstrated in \cite{Ki} in view of the strong dependence of the summands
 in (\ref{1.2}) on the far away members of the random field. For specific
 lattice models with sufficiently good mixing properties to fit our setup
 we refer the reader to \cite{Al} and references there.

 \section{Preliminaries and main results}\label{sec2}\setcounter{equation}{0}

Our setup consists of a  $\wp$-dimensional vector random field
$\{X(n),\, n\in\bbZ^\nu,\, X(n)\in\bbR^\wp\}$ on a probability space $(\Om,\cF,P)$ and of a family of
$\sig$-algebras $\cF_{\Gam}\subset\cF,\, \Gam\subset\bbZ^\nu$
such that $\cF_{\Gam}\subset\cF_{\Del}$ if $\Gam\subset\Del\subset\bbZ^\nu$.
It is often convenient to measure the dependence between two sub
$\sig$-algebras $\cG,\cH\subset\cF$ via the quantities
\begin{equation}\label{2.1}
\varpi_{q,p}(\cG,\cH)=\sup\{\| E\big (g|\cG\big)-Eg\|_p:\, g\,\,\mbox{is}\,\,
\cH-\mbox{measurable and}\,\,\| g\|_q\leq 1\}
\end{equation}
where the supremum is taken over real functions and $\|\cdot\|_r$ is the
$L^r(\Om,\cF,P)$-norm. Then more familiar $\al,\rho,\phi$ and $\psi$-mixing 
(dependence) coefficients can be expressed via the formulas (see \cite{Bra},
Ch. 4 ),
\begin{eqnarray*}
&\al(\cG,\cH)=\frac 14\varpi_{\infty,1}(\cG,\cH),\,\,\rho(\cG,\cH)=\varpi_{2,2}
(\cG,\cH)\\
&\phi(\cG,\cH)=\frac 12\varpi_{\infty,\infty}(\cG,\cH)\,\,\mbox{and}\,\,
\psi(\cG,\cH)=\varpi_{1,\infty}(\cG,\cH).
\end{eqnarray*}
We set also
\begin{equation}\label{2.2}
\varpi_{q,p}(r)=\sup_{\Gam,\Del:\,dist(\Gam,\Del)\geq r}\big(
|\Gam\cup\Del|^{-1}\varpi_{q,p}(\cF_{\Gam},\cF_{\Del})\big)
\end{equation}
where $\Gam$ and $\Del$ are finite nonempty subsets of $\bbZ^\nu$,
$dist(\Gam,\Del)=\inf_{n\in\Gam,\tilde n\in\Del}|n-\tilde n|$ and
we write $|\Gam|$ for cardinality of a set $\Gam$ while, as usual, for
 numbers or vectors $|\cdot |$ will denote their absolute values or lengths.
As shown in \cite{Do} imposing decay conditions on dependence coefficients
which do not take into account sizes of sets $\Gam$ and $\Del$ as in (\ref{2.2})
would exclude from our setup simple examples of Gibbs random fields. Define  
also
\[
\al(l)=\frac{1}{4}\varpi_{\infty,1}(l),\,\rho(l)=\varpi_{2,2}(l),\,
\phi(l)=\varpi_{\infty,\infty}(l)\,\,\mbox{and}\,\,\ \psi(l)=\varpi_{1,
\infty}(l).
\]
Our setup includes also conditions on the  approximation rate
\begin{equation}\label{2.3}
\beta_p(r)=\sup_{n\in\bbZ^\nu}\|X(n)-E\big(X(n)|\cF_{U_r(n)}\big)\|_p
\end{equation}
where $U_r(n)=\{ \tilde n\in\bbZ^\nu:\, |n-\tilde n|\leq r\}$ is the 
$r$-neghborhood on $n$ in $\bbZ^\nu$. Furthermore, we do not require 
stationarity of the random field $X(n), n\in\bbZ^\nu$ assuming only that the
distribution of $X(n)$ does not depend on $n$ and the joint distribution of 
$\{X(n), X(n')\}$  depends only on $n-n'$ which we write for further references
by 
\begin{equation}\label{2.4}
X(n)\stackrel {d}{\sim}\mu\,\,\mbox{and}\,\,
(X(n),X(n'))\stackrel {d}{\sim}\mu_{n-n'}\,\,\mbox{for all}\,\,n,n'
\end{equation}
where $Y\stackrel {d}{\sim}Z$ means that $Y$ and $Z$ have the same distribution.

Next, let $F= F(x_1,...,x_\ell),\, x_j\in\bbR^{\wp}$ be a function on 
$\bbR^{\wp\ell}$ such that for some $\iota,K>0,\ka\in (0,1]$ and all 
$x_i,y_i\in\bbR^{\wp}, i=1,...,\ell$, we have
\begin{equation}\label{2.5}
|F(x_1,...,x_\ell)-F(y_1,...,y_\ell)|\leq K\big(1+\sum^\ell_{j=1}|x_j|^\iota+
\sum^\ell_{j=1} |y_j|^\iota \big)\sum^\ell_{j=1}|x_j-y_j|^\ka
\end{equation}
and 
\begin{equation}\label{2.6} 
|F(x_1,...,x_\ell)|\leq K\big( 1+\sum^\ell_{j=1}|x_j|^{\iota} \big).
\end{equation}
Our assumptions on $F$ enable us to include, for instance,
products $F(x_1,...,x_\ell)=x_{11}x_{22}\cdots x_{\ell\ell}$, where 
$x_i=(x_{i1},...,x_{i\ell})\in\bbR^\ell$, which is sometimes useful.
To simplify formulas we assume a centering condition
\begin{equation}\label{2.7}
\bar F=\int F(x_1,...,x_\ell)\,d\mu(x_1)\cdots d\mu(x_\ell)=0
\end{equation}
which is not really a restriction since we always can replace $F$ by 
$F-\bar F$.

Our goal is to prove an invariance principle (functional central limit theorem)
for $\xi_N(t),\, t\in [0,1]^\nu$ defined by (\ref{1.2}) where $q_j(n)=jn$ if
$j=1,2,...,k\leq\ell$ for some given positive integers $k,\ell$ and if $k<\ell$ then
 $q_j:\,\bbZ^\nu\to\bbZ^\nu,\, j=k+1,...,\ell$ satisfy the
conditions below. Before we formulate them observe that already the case $k=\ell$
is of major interest and we add $q_j's$ with $j>k$ which grow faster than linearly
mostly for the sake of completeness which under appropriate assumptions does 
not cause substantial problems.
We assume that $|q_1(n)|< |q_2(n)| <\cdots <|q_\ell(n)|$ whenever $|n|\ne 0$ 
and $n\in\bbZ_+^\nu=\{ m=(m_1,...,m_\nu)\in\bbZ^\nu:\, 
 m_i\geq 0\,\forall i\}$. Furthermore, we assume that for $k+1\le i \le \ell$,
 \begin{equation}\label{2.8-}
 \inf_{|\tilde n|>|n|}\big(|q_i(\tilde n)|-|q_i(n)|\big)(|\tilde n|-|n|)^{-1}
 >0,
 \end{equation}
\begin{equation}\label{2.8}
\lim_{|n|\to\infty}\inf_{\tilde n:\tilde n\ne n}\big(|q_i(\tilde n)-q_i(n)|-
|\tilde n-n|\big)=\infty,
\end{equation}
\begin{equation}\label{2.8+}
q_i(n)\ne q_i(\tilde n)\,\,\mbox{if}\,\, n\ne\tilde n,\,\,
\lim_{|n|\to\infty}\min_{l<i}\big(|q_i(n)|-|q_l(n)|-|n|\big)=\infty
\end{equation}
and for any $\ve > 0$,
\begin{equation}\label{2.9}
\lim_{|n|\to\infty}\inf_{\tilde n:\, |\tilde n|\geq\ve |n|}\min_{l<i}\big(
|q_{i}(\tilde n)|-|q_l(n)|-|\tilde n-n|\big)=\infty.
\end{equation}
For each $\theta>0$ set
\begin{equation}\label{2.10}
\gamma_\theta^\theta = \|X\|_\theta^\theta= E|X(n)|^\theta  =
\int |x|^\theta d\mu .
\end{equation}
Our main result relies on

\begin{assumption}\label{ass2.1} With $d=(\ell-1)\wp$ there exist $p,q\ge 1$,
 $m\geq 4$ and $\delta >0$ with   $ \delta \le \ka$, $ p\ka>d$ satisfying 
\begin{equation}\label{2.11}
\sum_{l=0}^\infty l^{5\nu}\varpi_{q,p}(l)=\te(p,q)<\infty,
\end{equation}
\begin{equation}\label{2.12}
\sum_{r=0}^\infty r^{5\nu}\beta^\del_q( r)<\infty,
\end{equation}
\begin{equation}\label{2.13}
\gamma_{m}<\infty, \,\gamma_{2q\iota}<\infty\,\,\mbox{with}\,\,
 \frac{1}{2}\ge \frac{1}{p}+\frac{\iota+1}{m}+\frac{\delta}{q}.
 \end{equation}
 \end{assumption}
 
 In order to give a detailed statement of our main result as well as for its
proof it will be essential to represent the function $F= F(x_1,x_2,\ldots,
x_\ell)$ in the form 
\begin{equation}\label{2.14}
F=F_1(x_1)+\cdots+F_\ell(x_1, x_2,\ldots, x_\ell)
\end{equation}
where for $i<\ell$,
\begin{eqnarray}\label{2.15}
&F_i(x_1,\ldots, x_i)=\int F(x_1,x_2, \ldots, x_\ell)\ d\mu (x_{i+1})\cdots 
d\mu(x_\ell)\\
&\quad -\int F(x_1,x_2, \ldots, x_\ell) \,d\mu (x_i)\cdots d\mu(x_\ell)\nonumber
\end{eqnarray}
and
\[
F_\ell(x_1,x_2, \ldots, x_\ell)=F(x_1,x_2, \ldots, x_\ell) -\int F(x_1,x_2, 
\ldots, x_\ell)\, d\mu(x_\ell)
\]
which ensures, in particular, that
\begin{equation}\label{2.16}
\int F_i(x_1, x_2,\ldots,x_{i-1}, x_i)\,d\mu(x_i)\equiv 0 \quad\forall 
\quad x_1, x_2,\ldots, x_{i-1}.
\end{equation}
We write $t=(t_1,...,t_\nu)\geq s=(s_1,...,s_\nu)$ if $t_i\geq s_i$ for all $i$
and for such $s,t\in[0,1]^\nu$ we set $\Del_N(s,t)=\{ n=(n_1,...,n_\nu)\in
\bbZ^\nu:\, Ns_i\leq n_i\leq Nt_i\,\forall i\}$ and $\Del_N(t)=\Del_N(0,t)$.
These together with (\ref{2.14})--(\ref{2.16}) enable us to represent 
$\xi_N(t)$ given by (\ref{1.2}) in the form
\begin{equation}\label{2.17}
\xi_N(t)=\sum_{i=1}^k \xi_{i,N}(it)+\sum_{i=k+1}^\ell \xi_{i,N}(t)
\end{equation}
where for $1\leq i\leq k$,
\begin{equation}\label{2.18}
\xi_{i,N}(t)= N^{-\nu/2}\sum_{n\in\Del_N(t/i)} F_i(X(n), X(2n),\ldots, X(in))
\end{equation}
and for $i\ge k+1$,
\begin{align}\label{2.19}
\xi_{i,N}(t)=N^{-\nu/2}\sum_{n\in\Del_N(t)}  F_i(X(q_1(n)),\ldots,
 X(q_i(n))).
\end{align}
Next, we define a matrix $(D_{i,j},\, 1\leq i,j\leq k)$ which appears in the limiting
covariances formula in our main result below. For any $i,j\leq k$ set
\begin{equation}\label{2.19+}
D_{i,j}=\frac {\up}{ij}\sum_{u\in\bbZ^\nu} c_{i,j}(u)
\end{equation}
where $\up$ is the greatest common divisor of $i$ and $j$, $c_{i,j}(u)=0$ if $\up$
does not divide all components of $u\in\bbZ^\nu$ and for $i'=i/\up,\, j'=j/\up$,
\begin{eqnarray}\label{2.19++}
&c_{i,j}(\up u)=\int F_i(x_1,...,x_i)F_j(y_1,...,y_j)\prod_{\al=1}^\up 
d\mu_{\al u}(x_{\al i'},y_{\al j'})\\
&\prod_{\sigma\notin \{i',2i',\ldots,\up i'\}} d\mu(x_\sigma)
\prod_{\sigma'\notin \{j',2j',\ldots,\up j'\}}d\mu(y_{\sigma'})\nonumber
\end{eqnarray}
with $\mu_0$ being the diagonal measure, i.e. $\int f(x,y)d\mu_0(x,y)=\int f(x,x)d\mu(x)$.

\begin{theorem}\label{thm2.2}
Suppose that the conditions (\ref{2.4})--(\ref{2.9}) and Assumption \ref{ass2.1} 
hold true then each random field 
$\xi_{i,N}(t),\, i=1,2,...,\ell$ converges in distribution as $N\to\infty$ 
to a Gaussian random field $\eta_i(t)$. Moreover, 
$(\eta_1(t),\eta_2(t),...,\eta_\ell(t))$ is an $\ell$-dimensional Gaussian
random field such that $\eta_i(t),\, i\leq k$ have covariances 
\[
E\eta_i(s)\eta_j(t)=D_{i,j}\prod_{l=1}^\nu\min(s_l,t_l),\,\,i,j\leq k
\]
with matrix $D_{i,j}$ defined by (\ref{2.19+}) while the random
fields $\eta_i(t),\, i\geq k+1$ are independent of each other and of 
$\eta_j$'s with $j\leq k$ and have variances
\[
E|\eta_i(t)|^2=\int|F_i(x_1,x_2,...,x_i)|^2d\mu(x_1)d\mu(x_2)\cdots d\mu(x_i),\,\,
i\geq k+1.
\]
Finally, $\xi_N(t)$ converges in distribution to a Gaussian random field
$\xi(t)$ which can be represented in the form
\begin{equation}\label{2.20}
\xi(t)=\sum_{i=1}^k\eta_i(it)+\sum_{i=k+1}^\ell \eta_i(t).
\end{equation}
\end{theorem}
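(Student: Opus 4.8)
The plan is to reduce the random-field invariance principle to two separate tasks, following the decomposition \eqref{2.17}: convergence of finite-dimensional distributions of each $\xi_{i,N}$ (and of the joint vector), and tightness of each $\xi_{i,N}$ as a random field on $[0,1]^\nu$. For the finite-dimensional distributions, the key device is the representation \eqref{2.14}--\eqref{2.16}, which makes each summand in $\xi_{i,N}$ a ``martingale-difference-like'' object in the sense that conditioning on the $\sigma$-algebra generated by the earlier coordinates integrates it to zero (up to the approximation error controlled by $\beta_p$). First I would replace $X(n)$ everywhere by its conditional expectation $E(X(n)\mid\cF_{U_r(n)})$ on blocks of an appropriate scale $r=r(N)\to\infty$, using \eqref{2.12} and the moment/regularity hypotheses \eqref{2.5}--\eqref{2.6}, \eqref{2.13} to show the $L^2$ (indeed $L^m$) cost of this truncation is negligible after the $N^{-\nu/2}$ normalization; the Hölder-type exponent bookkeeping in \eqref{2.13} is exactly what makes $|F(X)-F(\tilde X)|$ integrable against the moment bounds. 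Then I would group the summands of $\Del_N(t)$ into large blocks separated by gaps growing slowly with $N$; across gaps the blocks are nearly independent by \eqref{2.11}, and conditions \eqref{2.8}--\eqref{2.9} guarantee that the arguments $q_i(n)$ for $n$ in distinct blocks are genuinely far apart in $\bbZ^\nu$, so the mixingale estimates of \cite{ML1}, \cite{ML2} apply block-by-block.

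With this block structure in hand, the CLT for finite-dimensional distributions becomes a CLT for a triangular array of weakly dependent $\bbR^\ell$-valued vectors (one coordinate per $\xi_{i,N}$, and each evaluated at finitely many times $t^{(1)},\dots,t^{(M)}$). I would apply a Lindeberg/mixingale central limit theorem for random fields: the Lindeberg condition follows from $\gamma_m<\infty$ with $m\ge 4$ and the $N^{-\nu/2}$ scaling, and the covariance computation is where $D_{i,j}$ and the measures $\mu_{n-n'}$ in \eqref{2.4} enter. Concretely, $E\xi_{i,N}(s)\xi_{j,N}(t)$ is a double sum over $n,n'$ of $E\big(F_i(X(n),\dots,X(in))F_j(X(n'),\dots,X(jn'))\big)$; by \eqref{2.16} only pairs for which the index sets $\{n,2n,\dots,in\}$ and $\{n',2n',\dots,jn'\}$ \emph{overlap} contribute a non-vanishing term up to a summable mixing remainder, and a lattice-counting argument shows the overlapping pairs are parametrized by $n'=(i/\up)n + $ (bounded shift) when $\up=\gcd(i,j)$ divides the relevant coordinates — this produces precisely the constant \eqref{2.19+}--\eqref{2.19++} and the factor $\prod_l\min(s_l,t_l)$ from the volume of $\Del_N(s/i)\cap(\text{rescaled }\Del_N(t/j))$. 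For $i\ge k+1$, conditions \eqref{2.8+}--\eqref{2.9} force the index sets $q_1(n),\dots,q_i(n)$ to drift apart from those of any $\xi_{j,N}$, $j\ne i$, and from $\xi_{j,N}$ for $j\le k$, yielding the claimed independence and the simpler diagonal variance $\int|F_i|^2\,d\mu^{\otimes i}$.

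For tightness on $[0,1]^\nu$ I would establish a moment bound of the form $E|\xi_{i,N}(t)-\xi_{i,N}(s)|^{2}\le C\,|\Del_N(s,t)|\,N^{-\nu}$ refined to a fourth-moment (or $(2+\epsilon)$-increment-over-blocks) estimate $E|\xi_{i,N}(\Del)|^{4}\le C\big(|\Del|N^{-\nu}\big)^{2}$ for rectangles $\Del$, which is the standard sufficient condition for tightness of multiparameter processes (the Bickel--Wichura criterion); here the mixingale machinery of \cite{ML1},\cite{ML2} supplies the needed hypercontractive-type bound on block sums, using \eqref{2.11}, \eqref{2.13} and $m\ge 4$ to control the fourth moment. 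The main obstacle I anticipate is precisely the covariance bookkeeping in the multidimensional lattice: keeping track, in $\bbZ^\nu$ rather than $\bbZ$, of which shifted copies of the arithmetic progressions $\{n,2n,\dots,in\}$ and $\{n',\dots,jn'\}$ can coincide, showing all other pairs contribute a remainder that is summable after dividing by $N^\nu$ (this is where the polynomial weights $l^{5\nu}$ in \eqref{2.11}--\eqref{2.12} are calibrated, the power $5\nu$ absorbing the $\nu$-dimensional volume growth that appears when summing $\varpi_{q,p}(\cF_\Gam,\cF_\Del)$ over the sizes of $\Gam\cup\Del$ as in \eqref{2.2}), and verifying the limit constant is exactly \eqref{2.19+}. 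A secondary difficulty is that, unlike in \cite{KV}, there is no martingale approximation, so the block decomposition must be arranged so that both the CLT and the tightness estimate go through with the \emph{same} choice of block and gap sizes as functions of $N$; I would fix those scales once at the outset (e.g. block side $\sim N^{\theta}$, gap $\sim N^{\theta'}$ with $0<\theta'<\theta<1$ chosen from $\te(p,q)$ and the exponents in \eqref{2.13}) and carry them through both arguments.
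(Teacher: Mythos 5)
Your overall architecture matches the paper's: the decomposition (\ref{2.17}), the replacement of $X(n)$ by $E(X(n)|\cF_{U_r(n)})$ with cost controlled by (\ref{2.12}), a big-block/gap scheme feeding a mixingale CLT from \cite{ML1}, \cite{ML2}, the Diophantine counting of solutions of $in-jn'=\up u$ that produces (\ref{2.19+})--(\ref{2.19++}) and the factor $\prod_l\min(s_l,t_l)$ (this is exactly Proposition \ref{prop4.4}), the separation argument from (\ref{2.8+})--(\ref{2.9}) for the independence and diagonal variances when $i\ge k+1$, and tightness via the fourth-moment bound $E|\xi_{i,N}(s,t)|^4\le C(N^{-\nu}|\Del_N(s,t)|)^2$ and the Bickel--Wichura criterion \cite{BW} (Lemma \ref{lem5.1}).

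The genuine gap is in the geometry of your blocking. You propose partitioning the index set into blocks of side $\sim N^{\theta}$ separated by gaps $\sim N^{\theta'}$. A mixingale array requires an \emph{increasing} one-parameter filtration $\cG_l$ together with estimates $\|E(V_l|\cG_{l-u})\|_2\le\sig_l\psi_u$ with $\psi_u\to0$ as the lag $u$ grows. With a grid of subcubes of $[0,N]^\nu$ there is no natural linear order: however you enumerate the blocks, the distance from block $l$ to the union of blocks $1,\dots,l-u$ stays equal to one gap width and does not grow with $u$, so the coefficients $\psi_u$ do not decay and McLeish's theorem does not apply. The paper's resolution is to linearize the $\nu$-dimensional sum \emph{radially}: the $l$-th block is the annular shell $\Up(a(l),b(l))$ around the origin with widths $[l^\tau]$ and gaps $[l^{2\eta}]$ growing in the shell index $l$ (see (\ref{3.6})--(\ref{3.7})), and the filtration is $\cG^{(i)}_l=\cF_{\Gam_i(l)}$ where $\Gam_i(l)$ is an $r(l)$-neighborhood of $\cup_{j\le i}q_j(\Sq(b(l)))$ --- note the $\sig$-algebras are attached to the \emph{images} under the $q_j$'s, not to the index cubes, which is where (\ref{2.8-}) and (\ref{2.9}) enter to keep the images of distinct shells separated. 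With this construction the distance from shell $l$ to $\cG^{(i)}_{l-u}$ grows like $(l-1)^\tau$ for $u\ge 2$ and one gets the mixingale bound (\ref{3.13+++}); with your $N$-dependent grid it would not. Relatedly, your fallback ``Lindeberg CLT for random fields'' runs into the obstruction the paper flags in the introduction: the summands depend strongly on far-away members of the field through the $q_j$'s, which is precisely why characteristic-function/weak-dependence block methods are avoided here.
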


In order to understand our assumptions observe that $\varpi_{q,p}$
 is clearly non-increasing in $q$ and non-decreasing in $p$. Hence,
 for any pair $p,q\geq 1$,
 \[
 \varpi_{q,p}(n)\leq\psi(n).
 \]
 Furthermore, by the real version of the Riesz--Thorin interpolation 
 theorem or the Riesz convexity theorem (see \cite{Ga}, Section 9.3
 and \cite{DS}, Section VI.10.11) whenever $\theta\in[0,1],\, 1\leq
 p_0,p_1,q_0,q_1\leq\infty$ and
 \[
 \frac 1p=\frac {1-\theta}{p_0}+\frac \theta{p_1},\,\,\frac 1q=\frac
 {1-\theta}{q_0}+\frac \theta{q_1}
 \]
 then
 \begin{equation}\label{2.21}
\varpi_{q,p}(n)\le 2(\varpi_{q_0,p_0}(n))^{1-\theta}
(\varpi_{q_1,p_1}(n))^\theta.
\end{equation}
In particular,  using the obvious bound $\varpi_{q_1,p_1}\leq 2$
valid for any $q_1\geq p_1$ we obtain from (\ref{2.21}) for pairs
$(\infty,1)$, $(2,2)$ and $(\infty,\infty)$ that for all $q\geq p\geq 1$,
 \begin{eqnarray}\label{2.22}
&\varpi_{q,p}(n)\le (2\alpha(n))^{\frac{1}{p}-\frac{1}{q}},\,
 \varpi_{q,p}(n)\le 2^{1+\frac 1p-\frac 1q}(\rho(n))^{1-\frac 1p+\frac 1q}\\
&\mbox{and}\,\,\varpi_{q,p}(n)\le 2^{1+\frac 1p}(\phi(n))^{1-\frac 1p}.
\nonumber\end{eqnarray}
We observe also that by the H\" older inequality for $q\geq p\geq 1$
and $\alpha\in(0,p/q)$,
\begin{equation}\label{2.23}
\beta_q(r)\le 2^{1-\alpha}  [\beta_p(r)]^\alpha \gamma^{1-\al}_{\frac{pq(1-\al)}
{p-q\al}}
\end{equation}
with $\gamma_\theta$ defined in (\ref{2.10}). Thus, we can formulate 
Assumption \ref{ass2.1} in terms of more familiar $\alpha,\,\rho,\,\phi,$
and $\psi$--mixing coefficients and with various moment conditions. It 
follows also from (\ref{2.21}) that if $\varpi_{q,p}(n)\to 0$ as $n\to\infty$
for some $q\geq p\geq 1$ then
\begin{equation}\label{2.24}
\varpi_{q,p}(n)\to 0\,\,\mbox{as}\,\, n\to\infty\,\,\mbox{for all}\,\,
q> p\geq 1,
\end{equation}
and so (\ref{2.24}) holds true under Assumption \ref{ass2.1}.

In order to prove Theorem \ref{thm2.2} we will represent $\xi_{i,N}(t)$ 
in the form $\sum_{1\leq l\leq N}Z_{t,N}(l)$ where now $l$ is one dimensional
which together with estimates of the next section will enable us to apply 
central limit theorems for mixingale arrays (see \cite{ML1} and \cite{ML2}). 
This will lead to Gaussian one dimensional distributions in the limit but 
combining this with a kind of the Cram\' er-Wold argument, covariances 
computation in Section \ref{sec4} and tightness estimates of Section \ref{sec5}
will yield appropriate Gaussian random fields as asserted in the theorem.
Recall (see \cite{KV}), that already in the one parameter case $\nu=1$ the
process $\xi(t)$, in general, does not have independent increments so also
in the random field case $\xi(t)$, in general, is not a multiparameter
Brownian motion.

\begin{remark}\label{rem2.3} As a part of tightness estimates of Section 
\ref{sec5} we will see that $\sup_{N\geq 1,\, t\in[0,1]^\nu}E|\xi_{i,N}(t)|^4
\leq C<\infty$. Hence, applying the Borel--Cantelli lemma we obtain as a 
byproduct that if $S_{i,N}=N^{\nu/2}\xi_{i,N}(t)$ and $S_{N}=N^{\nu/2}
\sum_{i=1}^\ell\xi_{i,N}(t)$ then with probability one
\[
\lim_{N\to\infty}\frac 1{N^\nu}S_{i,N}(t)=0\,\,\mbox{for each $i$, and so}\,\,
\lim_{N\to\infty}\frac 1{N^\nu}S_N(t)=0.
\]
Still, we observe that this strong law of large numbers can be obtained under
more general circumstances here since, in particular, we do not need for it
convergence of covariances derived in Section \ref{sec4} which requires, for
instance, more specific assumptions on $q_j$'s.
\end{remark}

\section{Blocks and mixingale type estimates}
\label{sec3}\setcounter{equation}{0}

We rely on the following result which appears as Corollary 3.6 in \cite{KV}. 

\begin{proposition}\label{prop3.1}
Let $\cG$ and $\cH$ be $\sig$-subalgebras on a probability space $(\Om,\cF,P)$,
$X$ and $Y$ be $d$-dimensional random vectors and $f=f(x,\om),\, x\in\bbR^d$ be a 
collection of random variables measurable with respect to $\cH$ and satisfying
\begin{equation}\label{3.1}
\|f( x,\omega)-f( y,\omega)\|_{q}\le C (1+|x|^\iota + |y|^\iota)|x-y|^\ka
\,\,\mbox{and}\,\,\|f(x,\omega)\|_{q}\le C (1+|x|^\iota)
\end{equation}
where  $q\geq 1$. 
Set $g(x)=Ef(x,\om)$. Then 
\begin{equation}\label{3.2}
\| E(f(X,\cdot)|\cG)-g(X)\|_\up\leq c(1+\| X\|^{\io+2}
_{b(\io+2)})(\vp_{q,p}(\cG,\cH)+\| X-E(X|\cG)\|^\del_{q})
\end{equation} 
provided $\ka-\frac dp>0$, $\frac 1\up\geq\frac 1p+
\frac 1{b}+\frac {\del}q$ with $c=c(C,\io,\ka,\del,p,q,\up,d)>0$ 
depending only on parameters in brackets. Moreover, let
$x=(w,z)$ and $X=(W,Z)$, where $W$ and $Z$ are $d_1$ and $d-d_1$-dimensional
random vectors, respectively, and let $f(x,\om)=f(w,z,\om)$ satisfy (\ref{3.1})
in $x=(w,z)$. Set $\tilde g(w)=Ef(w,Z(\om),\om)$. Then
\begin{eqnarray}\label{3.3}
&\| E(f(W,Z,\cdot)|\cG)-\tilde g(W)\|_\up\leq c(1+\| X\|^{\io+2}
_{b(\io+2)})\\
&\times\big(\vp_{q,p}(\cG,\cH)
+\| W-E(W|\cG)\|^\del_{q}+\| Z-E(Z|\cH)\|^\del_{q}\big).\nonumber
\end{eqnarray}
\end{proposition}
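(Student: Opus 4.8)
The plan is to reduce first to the case where $X$ is $\mathcal{G}$-measurable and then to estimate a suitable random field evaluated at $X$, combining the coefficient $\varpi_{q,p}(\mathcal{G},\mathcal{H})$ with a Kolmogorov--Chentsov type maximal inequality in the variable $x$; the two-variable statement (\ref{3.3}) will then be deduced from the one-variable one. For the reduction I would set $\bar X=E(X|\mathcal{G})$ and write
\[
E(f(X,\cdot)|\mathcal{G})-g(X)=\big(E(f(X,\cdot)|\mathcal{G})-E(f(\bar X,\cdot)|\mathcal{G})\big)+\big(E(f(\bar X,\cdot)|\mathcal{G})-g(\bar X)\big)+\big(g(\bar X)-g(X)\big).
\]
Conditional expectation is an $L^\upsilon$-contraction, and $g$ inherits from (\ref{3.1}) the polynomial growth and the bound $|g(x)-g(y)|\le C(1+|x|^\iota+|y|^\iota)|x-y|^\kappa$; hence the first and third summands are controlled, via (\ref{3.1}) and H\"older's inequality, by $c(1+\|X\|^{\iota+2}_{b(\iota+2)})\|X-\bar X\|_q^{\delta}$, the power of $\|X-\bar X\|_q$ being lowered from $\kappa$ to $\delta$ through $\|X-\bar X\|_q^{\kappa}\le(2\|X\|_q)^{\kappa-\delta}\|X-\bar X\|_q^{\delta}$ (here $\delta\le\kappa$ is used), the H\"older exponents being exactly what $\frac1\upsilon\ge\frac1p+\frac1b+\frac\delta q$ permits. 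It therefore suffices to estimate the middle term, i.e. to prove (\ref{3.2}) when $X$ is $\mathcal{G}$-measurable.

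Assume then that $X$ is $\mathcal{G}$-measurable. Using a jointly measurable version of $x\mapsto E(f(x,\cdot)|\mathcal{G})$ (which exists thanks to the $L^q$-continuity built into (\ref{3.1})), the substitution rule for $\mathcal{G}$-measurable arguments gives $E(f(X,\cdot)|\mathcal{G})-g(X)=\psi(X,\cdot)$, where $\psi(x,\cdot):=E(f(x,\cdot)|\mathcal{G})-g(x)=E(h_x|\mathcal{G})$ with $h_x:=f(x,\cdot)-Ef(x,\cdot)$. For each fixed $x$, $h_x$ is $\mathcal{H}$-measurable with $Eh_x=0$ and $\|h_x\|_q\le 2C(1+|x|^\iota)$, so (\ref{2.1}) gives $\|\psi(x,\cdot)\|_p\le 2C(1+|x|^\iota)\varpi_{q,p}(\mathcal{G},\mathcal{H})$. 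The crucial observation is that the increments of $\psi$ satisfy the same bound: $h_x-h_y$ is again $\mathcal{H}$-measurable with zero mean and $\|h_x-h_y\|_q\le 2C(1+|x|^\iota+|y|^\iota)|x-y|^\kappa$ by (\ref{3.1}), whence $\|\psi(x,\cdot)-\psi(y,\cdot)\|_p\le 2C(1+|x|^\iota+|y|^\iota)|x-y|^\kappa\,\varpi_{q,p}(\mathcal{G},\mathcal{H})$. Since $\kappa-\frac dp>0$, the quantitative Kolmogorov--Chentsov continuity theorem, applied on each unit cube $Q_k\subset\mathbb{R}^d$ with centre $x_k$ and $R_k:=\sup_{x\in Q_k}|x|$, produces a continuous modification of $\psi$ with $\big\|\sup_{x\in Q_k}|\psi(x,\cdot)|\big\|_p\le c\,(1+R_k^\iota)\,\varpi_{q,p}(\mathcal{G},\mathcal{H})$. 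Tiling $\mathbb{R}^d$ by the $Q_k$ and using disjointness of the events $\{X\in Q_k\}$,
\[
E|\psi(X,\cdot)|^\upsilon=\sum_k E\big[\mathbf{1}_{\{X\in Q_k\}}|\psi(X,\cdot)|^\upsilon\big]\le\sum_k\big\|\sup_{x\in Q_k}|\psi(x,\cdot)|\big\|_p^{\upsilon}\,P(X\in Q_k)^{1-\upsilon/p},
\]
by H\"older's inequality with exponents $p/\upsilon$ and $p/(p-\upsilon)$ (note $\upsilon<p$). For cubes far from the origin $P(X\in Q_k)\le c\,\|X\|_{b(\iota+2)}^{b(\iota+2)}|x_k|^{-b(\iota+2)}$ by Markov's inequality, so the series converges and is bounded by $c(1+\|X\|^{\upsilon(\iota+2)}_{b(\iota+2)})\varpi_{q,p}(\mathcal{G},\mathcal{H})^{\upsilon}$; taking $\upsilon$-th roots gives (\ref{3.2}) for $\mathcal{G}$-measurable $X$, hence in general.

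For (\ref{3.3}) I would replace $W$ by $E(W|\mathcal{G})$ and $Z$ by $\bar Z:=E(Z|\mathcal{H})$, the two replacements costing, as above, $c(1+\|X\|^{\iota+2}_{b(\iota+2)})(\|W-E(W|\mathcal{G})\|_q^{\delta}+\|Z-E(Z|\mathcal{H})\|_q^{\delta})$. Since $\bar Z$ is $\mathcal{H}$-measurable, $\tilde f(w,\omega):=f(w,\bar Z(\omega),\omega)$ is, for each $w$, $\mathcal{H}$-measurable and satisfies (\ref{3.1}) in $w$ with constant $C(1+2\|Z\|^{\iota}_{q\iota})$, absorbed into the moment factor, while $E\tilde f(w,\cdot)$ differs from $\tilde g(w)$ only by a term of the type already estimated; applying the one-variable result to $\tilde f$ and $E(W|\mathcal{G})$ then yields (\ref{3.3}).

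The step I expect to be the main obstacle is keeping the final bound linear in $\varpi_{q,p}(\mathcal{G},\mathcal{H})$ despite the randomness of the argument $X$ at which $f$ is evaluated --- this is precisely what makes the regularity hypothesis $\kappa>d/p$ necessary. It is resolved by the remark that not only the values but also the $x$-increments of the field $\psi(\cdot,\cdot)$ are of order $\varpi_{q,p}(\mathcal{G},\mathcal{H})$, so that the Kolmogorov--Chentsov maximal bound over unit cubes carries that order through; the moment conditions on $X$ then merely ensure that the summation over distant cubes converges.
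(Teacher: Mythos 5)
You should first be aware that this paper does not prove Proposition \ref{prop3.1} at all: it is imported verbatim, the text stating that it ``appears as Corollary 3.6 in \cite{KV}''. So there is no in-paper argument to compare against, and I can only assess your attempt on its own terms (and against the proof in \cite{KV}). Your central mechanism is the right one, and it is essentially the mechanism of \cite{KV}: regard $\psi(x,\cdot)=E(f(x,\cdot)|\mathcal{G})-g(x)$ as a random field whose values \emph{and} $x$-increments are both $O(\varpi_{q,p}(\mathcal{G},\mathcal{H}))$ in $L^p$ by (\ref{2.1}) and (\ref{3.1}), obtain sup-bounds on unit cubes by Kolmogorov--Chentsov chaining using $\kappa p>d$, tile $\mathbb{R}^d$, and pay for the randomness of the argument with the moments of $X$. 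That is exactly why $\kappa>d/p$ is in the hypotheses, as you correctly identify.

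There are, however, two genuine gaps. (a) In the reduction step you control $\|E(f(X,\cdot)|\mathcal{G})-E(f(\bar X,\cdot)|\mathcal{G})\|_\upsilon$ ``via (\ref{3.1}) and H\"older''. But (\ref{3.1}) is an $L^q$ bound for \emph{deterministic} arguments $x,y$; since $X$ and $\bar X$ are not independent of the $\omega$-randomness of $f$, you cannot substitute them into (\ref{3.1}). What is needed is a pointwise bound $|f(x,\omega)-f(y,\omega)|\le A(\omega)(1+|x|^{\iota+1}+|y|^{\iota+1})|x-y|^{\delta}$ with $\|A\|_q<\infty$, which requires running the same chaining-plus-tiling argument on $f$ itself (legitimate, since $q\ge p$ gives $\kappa q\ge\kappa p>d$, and $\kappa\le1$ forces $p>d$ so the cube summation converges); so the reduction is not cheaper than the main step but another instance of it. The same objection applies to substituting $\bar Z=E(Z|\mathcal{H})$ into $f$ in your treatment of (\ref{3.3}). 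The term $g(\bar X)-g(X)$ is fine, since $g$ is deterministic and inherits a pointwise H\"older bound. (b) Your tail summation $\sum_k(1+R_k^\iota)^\upsilon P(X\in Q_k)^{1-\upsilon/p}$ is not guaranteed to converge: Markov gives a decay exponent $b(\iota+2)(1-\upsilon/p)-\iota\upsilon$, which the hypothesis $\frac1\upsilon\ge\frac1p+\frac1b+\frac\delta q$ only forces to be $\ge 2\upsilon$, not $>d$; and even when it converges it yields the power $\|X\|_{b(\iota+2)}^{b(\iota+2)(1-\upsilon/p)/\upsilon}$, which exceeds the claimed $\iota+2$. The standard repair is to put the weight on the field rather than on the probabilities: set $M=\sup_k(1+R_k^{\iota+1})^{-1}\sup_{x\in Q_k}|\psi(x,\cdot)|$, bound $\|M\|_p^p$ by the $\ell^p$ sum $\sum_k(1+R_k)^{-p}O(\varpi_{q,p}^p)$, convergent because $p>d$, and then apply H\"older to $|\psi(X,\cdot)|\le\bigl(1+(|X|+\sqrt d)^{\iota+1}\bigr)M$ with exponents satisfying $\frac1\upsilon\ge\frac1{b'}+\frac1p$ and $b'(\iota+1)\le b(\iota+2)$. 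With (a) and (b) repaired your argument goes through.
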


We will use the following notations
\begin{eqnarray}\label{3.5}
&\quad F_{i,n,r}=F_{i,n,r}(x_1,x_2,\ldots, x_{i-1},\omega)
=E\big(F_i(x_1,x_2,\ldots, x_{i-1},X(n))|\cF_{U_r(n)}\big),\\
&Y_i(q_i(n))=F_i(X(q_1(n)),\ldots, X(q_i(n)))\,\,\mbox{and}\,\,
Y_i(m)=0\,\,\mbox{if}\,\, m\ne q_i(n)\,\,\mbox{for any}\,\, n,\nonumber\\
&X_r(n)=E(X(n)|\cF_{U_r(n)}),\, Y_{i,r}(q_i(n))=F_{i,q_i(n),r} (X_r(q_1(n)),
\nonumber\\
&\ldots, X_r(q_{i-1}(n)),\omega)\,\,
\mbox{and}\,\, Y_{i,r}(m)=0\,\,\mbox{if}\,\, m\ne q_i(n)\,\,\mbox{for any}\,
 n;\nonumber\\
&\bar Y_i(n)=Y_i(n)-EY_i(n),\,\bar Y_{i,r}(n)=Y_{i,r}(n)-EY_{i,r}(n).
\nonumber
\end{eqnarray}

For each $l\in\bbZ_+$ introduce cubes $\Sq(l)=\{ n=(n_1,...,n_\nu)\in\bbZ^\nu:
0\leq n_i\leq l\,\,\mbox{for}\,\, i=1,...,\nu\}$ and for $l<\tilde l$ we set 
also $\Up(l,\tilde l)=\Sq(\tilde l)\setminus\Sq(l)$. Fix some positive numbers
$\frac 5{11}(\tau+1)<3\eta<\tau<1$ and set $a(1)=0,b(1)=1$ and for 
$j>1$,
\begin{equation}\label{3.6}
a(j)=b(j-1)+[(j-1)^{2\eta}],\, b(j)=a(j)+[j^\tau]\,\,\mbox{and}\,\, r(j)=[j^\eta].
\end{equation}
Set $\Del^{(i)}_N(t)=\Del_N(t/i)$ if $i\leq k$ and $\Del_N^{(i)}(t)=
\Del_N(t)$ if $i\geq k+1$. We define now 
\begin{eqnarray}\label{3.7}
&V_{i,t,N}(l)=\sum_{n\in\Del^{(i)}_N(t)\cap\Up(a(l),b(l))}Y_{i,r(l)}(q_i(n))\\
&\mbox{and}\,\,W_{i,t,N}(l)=\sum_{n\in\Del^{(i)}_N(t)\cap\Up(b(l),a(l+1))}
Y_{i,r(l)}(q_i(n))\nonumber
\end{eqnarray}
The sets $\Up(b(l),a(l+1))$ will play the role of gaps between 
$\Up(a(l),b(l))$ and $\Up(a(l+1),b(l+1))$ and we will see that the random 
variables $W_{i,t,N}(l)$ can be disregarded for our purposes while dealing 
with the random variables $V_{i,t,N}(l)$ we will take advantage of our mixing 
conditions in order to show that their centered versions $\bar V_{i,t,N}(l)=
V_{i,t,N}(l)-EV_{i,t,N}(l)$ satify mixingale estimates (see \cite{ML1} and
\cite{ML2}) with respect to the nested family of $\sig$-algebras $\cG^{(i)}_l=
\cF_{\Gam_i(l)},\, l=0,1,2,...$ where 
\[
\Gam_i(l)=\{ n\in\bbZ^\nu_+:\,\mbox{dist}\big(n,\cup_{j\leq i}q_j(\Sq(b(l)))
\big)\leq r(l)\}
\]
and we take $\cG^{(i)}_l$ to be the trivial $\sig$-algebra $\{\emptyset,\Om\}$
for $l<0$.

Namely, for any $u\in\bbN$ we have
\begin{eqnarray}\label{3.8}
&\| E\big(\bar V_{i,t,N}(l)|\cG^{(i)}_{l-u}\big)\|_2\leq\sum_{n\in\Del_N(t)\cap
\Up(a(l),b(l))}\| E(\bar Y_{i,r(l)}(q_i(n))|\cG^{(i)}_{l-u})\|_2\\
&\leq |\Up(a(l),b(l))|\max_{n\in\Up(a(l),b(l))}\| E(\bar Y_{i,r(l)}(q_i(n))|
\cG^{(i)}_{l-u})\|_2
\nonumber\end{eqnarray}
where $|A|$ for a set $A$ denotes its cardinality. Next, for $u>l$,
\begin{equation}\label{3.9}
E(\bar Y_{i,r(l)}(q_i(n))|\cG^{(i)}_{l-u})=0
\end{equation}
while for all $u\geq 2$ by the Cauchy inequality and the
contraction property of conditional expectations 
\begin{equation}\label{3.10}
\| E(\bar Y_{i,r(l)}(q_i(n))|\cG^{(i)}_{l-u})\|_2\leq 2\| E(Y_{i,r(l)}
(q_i(n))|\cG^{(i)}_{l-2})\|_2.
\end{equation}
Observe that if $n\in\Up(a(l),b(l))$ then $X=\big(X_{r(l)}(q_1(n)),...,
X_{r(l)}(q_{i-1}(n))\big)$ is $\cG_l^{(i-1)}$-measurable and for large $l$
we obtain also by the definition of $q_i$ for $i\leq k$ and by (\ref{2.8-})
and (\ref{2.9}) for $i>k$ that
\begin{equation}\label{3.11}
\mbox{dist}\big(\Gam_i(l-2)\cup\Gam_{i-1}(l),q_i(n)\big)\geq (l-1)^\tau
\end{equation}
taking into account that $a(l)>\frac 1{1+\tau}(l-1)^{1+\tau}-(l-1)$.
We can write also that
\begin{equation}\label{3.11+}
b(l)\leq\frac 2{1+\tau}(l+1)^{1+\tau},\, |\Sq(l)|=(l+1)^\nu\,\,\mbox{and}\,\,
|\Up(\tilde l,l)|\leq\nu(l-\tilde l)(l+1)^{\nu-1}\,\,\mbox{if}\,\,   l>\tilde l.
\end{equation}
Thus, applying Proposition \ref{prop3.1} to the right hand side of 
(\ref{3.10}) with $\cG=\cF_{\Gam_i(l-2)\cup\Gam_{i-1}(l)}$ and $\cH=
\cF_{U_{r(l)}(q_i(n))}$ together with (\ref{3.11}), (\ref{3.11+}),
Assumption \ref{ass2.1} and
the contraction property of conditional expectations we obtain that
\begin{eqnarray}\label{3.12}
&\| E(Y_{i,r(l)}(q_i(n))|\cG^{(i)}_{l-2})\|_2\leq \| E(Y_{i,r(l)}(q_i(n))
|\cG)\|_2\\
&\leq C\vp_{p,q}(\cG,\cH)\leq\tilde Cl^{\nu(\tau+\eta+1)}\vp_{p,q}
((l-1)^\tau-l^\eta)\nonumber
\end{eqnarray}
for some $C,\tilde C>0$ independent of $n,l$ provided $p,q$ and $\del$ 
satisfy the conditions of Assumption \ref{ass2.1}. Collecting 
(\ref{3.7})--(\ref{3.10}), (\ref{3.11+}) and (\ref{3.12}) we obtain that 
for any $u\geq 2$,
\begin{equation}\label{3.13}
\| E(\bar V_{i,t,N}(l)|\cG^{(i)}_{l-u})\|_2\leq\tilde C2^{\nu(2+\tau)}
l^{\nu(2\tau+\eta+2)-1}\vp_{p,q}((l-1)^\tau-l^\eta).
\end{equation}

In order to incorporate (\ref{3.13}) into the setup of mixingale arrays from
\cite{ML2} we consider the triangular array $\hat V_{i,t,N}(l)=N^{-\nu/2}
\bar V_{i,t,N}(l),\, l=1,2,...,L(N);\, N=1,2,...$ where $L(N)=\min\{ j:\,
a(j+1)\geq N\}$. Observe that
\[
N\geq\sum_{1\leq j\leq L(N)-1}[j^\tau]\geq(1+\tau)^{-1}(L(N)-1)^{1+\tau},
\]
and so
\begin{equation}\label{3.18}
L(N)\leq (N(1+\tau))^{1/(1+\tau)}+1\leq 2N^{1/(1+\tau)}+1.
\end{equation}
Employing Lemma \ref{lem4.2} from the next section together with (\ref{2.13}),
(\ref{3.11+}) and (\ref{3.18}) we obtain that
\begin{equation}\label{3.13+}
\|V_{i,t,N}(l)\|^2_2\leq C|\Up(a(l),b(l))|\leq\frac {2C\nu}{1+\tau}l^\tau
(l+1)^{(1+\tau)(\nu-1)}\leq C_1N^{\nu-\frac 1{1+\tau}}
\end{equation}
for some $C,C_1>0$ independent of $l\leq L(N),N,i$ and $t$. Observe that by 
the choice of $\tau$ and $\eta$ we have that $\nu(2\tau+\eta+2)-1-5\nu\tau<-1$
 which together with (\ref{2.11}) and (\ref{3.13}) enables us
to write
\begin{equation}\label{3.13++}
\| E(\bar V_{i,t,N}(l)|\cG^{(i)}_{l-u})\|_2\leq C_2l^{-1}\leq C_2u^{-1}
\end{equation}
for some $C_2\geq 1$ independent of $l,N,i,t$ and $u=2,3,...,l$. Set
$\psi_j=C_2(\max(1,j))^{-1}$ for $j=0,1,2,...$ and $\sig_{l,N}=C_1N^{-\frac
1{2(1+\tau)}}$ for $l=1,2,...,L(N)$. Then (\ref{3.9}), (\ref{3.13+})
and (\ref{3.13++}) yield the first standard mixingale estimate
\begin{equation}\label{3.13+++}
\| E(\hat V_{i,t,N}(l)|\cG^{(i)}_{l-u})\|_2\leq\sig_{l,N}\psi_u
\end{equation}
 for all $u=0,1,2,...$
and the conditions imposed on $\psi_j$'s and $\sig_{l,N}$'s in \cite{ML2}
can be easily verified, as well. The second standard mixingale estimate 
(see \cite{ML2}) is trivial in our case since 
$\bar V_{i,t,N}(l)$ is $\cG^{(i)}_{l+u}$-measurable for any $u\geq 1$, and
so
\begin{equation}\label{3.14}
E(\bar V_{i,t,N}(l)|\cG^{(i)}_{l+u})-\bar V_{i,t,N}(l)=0.
\end{equation}

Next, we estimate contribution of small blocks (gaps) $W_{i,t,N}(j),\, 
j\geq 1$. Set 
\[
\hat\Gam_i(l)=\{ n\in\bbZ^\nu_+:\,\mbox{dist}\big(n,\cup_{j\leq i}q_j\big(
\Sq(a(l+1))\big)\big)\leq r(l)\},
\]
$\cG=\cF_{\hat\Gam_{i-1}(l)\cup\hat\Gam_i(l-2)}$ and 
$\cH=\cF_{U_{r(l)}(q_i(n))}$ where $n\in\Up(b(l),a(l+1))$. Observe that by
the properties of $q_j$'s for any such $n$ and large enough $l$,
\[
\mbox{dist}(\hat\Gam_{i-1}(l)\cup\hat\Gam_i (l-2),q_i(n))\geq l^\tau.
\]
Furthermore, if $j\leq l-2$ then $W_{i,t,N}(j)$ is 
$\cF_{\hat\Gam_i(l-2)}$-measurable, and so
employing Proposition \ref{prop3.1} with such $\cG$ and $\cH$ we obtain 
similarly to (\ref{3.12}) relying also on the Cauchy inequality that 
\begin{eqnarray}\label{3.15}
& |EW_{i,t,N}(j)Y_{i,r(l)}(q_i(n))|=\big\vert E\big(W_{i,t,N}(j)
E(Y_{i,r(l)}(q_i(n))|\cG)\big)\big\vert\\
&\leq C_1\| W_{i,t,N}(j)\|_2l^{\nu(\tau+\eta+1)}\vp_{q,p}(l^\tau-l^\eta)
\nonumber\end{eqnarray}
for some $C_1>0$ independent of $t,N,n,l$ and $j\leq l-2$. Hence, by 
(\ref{3.7}) and (\ref{3.11+}),
\begin{eqnarray}\label{3.16}
&|EW_{i,t,N}(j)W_{i,t,N}(l)|=|E\big(W_{i,t,N}(j)E(W_{i,t,N}(l)|\cG)\big)|\\
&\leq |\Up(b(l),a(l+1))|\max_{n\in\Up(b(l),a(l+1))}
|E\big(W_{i,t,N}(j)E(Y_{i,r(l)}(q_i(n)))|\cG)\big)|\nonumber\\
&\leq C_2\| W_{i,t,N}(j)\|_2
l^{\nu(2\tau+\eta+2)+2\eta-\tau-1}\vp_{q,p}(l^\tau-l^\eta)\big)\nonumber
\end{eqnarray}
for some $C_2>0$ independent of $t,N,n,l$ and $j\leq l-2$. 
Now we can write
\begin{eqnarray}\label{3.19}
&E\big(\sum_{j=0}^{L(N)}W_{i,t,N}(j)\big)^2\leq\sum_{j=0}^{L(N)}
\big(3EW^2_{i,t,N}(j)\\
&+2\sum_{l=j-2}^{L(N)}|EW_{i,t,N}(j)W_{i,t,N}(l)|\big)
\leq\sum_{j=0}^{L(N)}(3\| W_{i,t,N}(j)\|^2_2+C_3\| W_{i,t,N}(j)\|_2)
\nonumber\end{eqnarray}
where by (\ref{2.11}), (\ref{3.16}) and the choice of $\tau$ and $\eta$,
\[
C_3=C_2\sum_{1\leq l\leq L(N)}l^{\nu(2\tau+\eta+2)}\vp_{p,q}(l^\tau-l^\eta)
\leq C_2\sum_{l=1}^\infty l^{5\nu\tau}\vp_{p,q}(l^\tau-l^\eta)<\infty.
\]

Relying on (\ref{2.3}), (\ref{2.5}), (\ref{2.13}) and the H\" older inequality
 we can estimate the error of replacement of $Y_i(q_i(n))$ by its 
 $r(l)$-approximation $Y_{i,r(j)}(q_i(n))$ (see Lemma 3.12 in \cite{KV}),
 \begin{equation}\label{3.20}
 \| Y_i(q_i(n))-Y_{i,r(j)}(q_i(n))\|_2\leq C_4\be_q^\del(r(j))
 \end{equation}
 for some $C_4>0$ independent of $i,j$ and $n$. Now, set
 \[
 \zeta_{i,N}(t)=N^{-\nu/2}\sum_{1\leq l\leq L(N)}V_{i,t,N}(l).
 \]
 Then by (\ref{3.11+}), (\ref{3.19}) and (\ref{3.20}),
 \begin{eqnarray}\label{3.21}
 &\|\xi_{i,N}(t)-\zeta_{i,N}(t)\|_2\leq C_5N^{-\nu/2}\big(\sum_{1\leq l\leq
  L(N)}l^{\nu(\tau+1)-1}\be^\del_q([l^\eta])\\
  &+\big(\sum_{0\leq l\leq L(N)}(\| W_{i,t,N}(l)\|^2_2+
  \| W_{i,t,N}(l)\|_2)\big)^{1/2}\big).\nonumber
  \end{eqnarray}
  It follows from (\ref{2.13}) and Lemma \ref{lem4.2} of the next section that 
  \begin{equation}\label{3.22}
  \| W_{i,t,N}(l)\|^2_2=\mbox{O}(|\Up(b(l),a(l+1))|).
  \end{equation}
  By Assumption \ref{ass2.1} and the choice of $\eta$ and $\tau$ 
  we obtain from (\ref{2.12}), (\ref{3.11+}), (\ref{3.18}), (\ref{3.21}) and
  (\ref{3.22}) that 
  \begin{eqnarray}\label{3.23}
  &\|\xi_{i,N}(t)-\zeta_{i,N}(t)\|_2\leq C_6N^{-\nu/2}\big(
  \sum_{1\leq l\leq L(N)}l^{\nu(\tau+1-5\eta)-1}\\
  &+(2\sum_{1\leq l\leq L(N)}l^{(\nu-1)(\tau+1)+2\eta})^{1/2}\big)
  \nonumber\\
  &\leq C_7\big(N^{\nu(\frac 12-\frac {5\eta}{1+\tau})}+N^{\frac {2\eta-\tau}
  {2(1+\tau)}}\big)\to 0\,\,\mbox{as}\,\ N\to\infty,\nonumber
  \end{eqnarray}
  and so for each $t$ the limits in distribution as 
  $N\to\infty$ of $\xi_{i,N}(t)$ and of $\zeta_{i,N}(t)$ coincide (if they 
  exist).
  
  Observe that similarly to (\ref{3.8}), (\ref{3.12}) and (\ref{3.13})
  it follows from (\ref{2.11}), (\ref{3.11+}) together with the contraction
  property of conditional expectations that
  \begin{eqnarray}\label{3.24}
  &|E\zeta_{i,N}(t)|\leq N^{-\nu/2}\sum_{1\leq l\leq L(N)}|EV_{i,t,N}(l)|\\
  &\leq C_8N^{-\nu/2}\sum_{1\leq l\leq L(N)}l^{\nu(2\tau+\eta+2)-1}
  \vp_{p,q}((l-1)^\tau-l^\eta)\nonumber\\
  &\leq C_9N^{-\nu/2}\sum_{1\leq l\leq 
  L(N)}l^{\nu(-3\tau+\eta+2)-1}\leq C_{10}N^{-\frac {11}{18}\nu},\nonumber
  \end{eqnarray}
  for some $C_8,C_9,C_{10}>0$ independent of $N$, and so $|E\zeta_{i,N}(t)|
  \to 0$ as $N\to\infty$. Hence, the Gaussian limiting behavior of 
  $\zeta_{i,N}(t)-E\zeta_{i,N}(t)$ which we will derive via mixingale limit
  theorems yields the same Gaussian limiting behavior of $\zeta_{i,N}(t)$,
  and so in view of (\ref{3.23}) the same holds true for $\xi_{i,N}(t)$, as
  well.

  \section{Limiting covariances}\label{sec4}\setcounter{equation}{0}

The first step in our limiting covariances computations is the following 
estimate of
\begin{equation*}
b_{i,j}(m,n)=EY_i(q_i(m))Y_j(q_j(n)),\,\, m,n\in\bbZ^\nu_+
\end{equation*}
where $Y_i(q_i(n))$ was defined in (\ref{3.5}).

\begin{lemma}\label{lem4.1} (i) For $i,j=1,...,\ell$ and any $m,n\in\bbZ^\nu_+$
set 
\begin{eqnarray}\label{4.1}
&\hat s_{i,j}(m,n)=\min\big(\min_{1\leq l\leq j}|q_i(m)-q_l(n)|,\,|m|\big)\\
&\mbox{and}\,\, s_{i,j}(m,n)=\max(\hat s_{i,j}(m,n),\,\hat s_{j,i}(n,m)).
\nonumber\end{eqnarray}
Then for all $i,j\leq k$,
\begin{equation}\label{4.2}
s_{i,j}(m,n)\geq\frac 1{4k^2}|im-jn|.
\end{equation}
Furthermore, if $i\geq k+1$ then for any $\ve>0$ there exists $M_\ve>0$ 
such that if $\max(|m|,|n|)\geq M_\ve$ and $m\ne n$ then 
\begin{equation}\label{4.3}
s_{i,i}(m,n)\geq \min\big(|m-n|+\ve^{-1},\max(|m|,|n|)\big)\geq\frac 12|m-n|.
\end{equation}
(ii) There exists a function $h(l)\geq 0$ defined on integers such that 
$\sum_{l=1}^\infty l^{4\nu} h(l)<\infty$ and for any $i,j=1,2,...,\ell$ and
$l=0,1,2,...$,
\begin{equation}\label{4.4}
\sup_{m,n\in\bbZ^\nu_+:\, s_{i,j}(m,n)\geq l}|b_{i,j}(m,n)|\leq h(l).
\end{equation}
\end{lemma}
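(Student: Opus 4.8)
Part (i). For $i,j\le k$ we have $q_i(m)=im$, $q_l(n)=ln$, and the cheap bound is as follows. Write $im-jn$; I want to show that one of the quantities $\min_{l\le j}|im-ln|$, $|m|$, $\min_{l\le i}|jn-lm|$, $|n|$ is at least $\frac1{4k^2}|im-jn|$. The idea is that $|im-ln|=|(i-l)m+l(m-n)\cdots|$ — more precisely, using $im-ln = (im-jn)+(jn-ln) = (im-jn)+(j-l)n$, if $|n|$ is much smaller than $|im-jn|$ then $|im-ln|$ is comparable to $|im-jn|$ for \emph{every} $l\le j$; similarly for the symmetric quantity with $|m|$. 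So if both $|m|$ and $|n|$ are smaller than, say, $\frac1{4k^2}|im-jn|$, then choosing $l=j$ in $\hat s_{i,j}$ already gives $|q_i(m)-q_j(n)| = |im-jn|$, so $\hat s_{i,j}(m,n)\ge\min(|im-jn|,|m|)$; the only problematic case is $|m|$ small, which we handle by the symmetric term $\hat s_{j,i}(n,m)$ and $|n|$. Carrying out the elementary case analysis on the relative sizes of $|m|$, $|n|$ and $|im-jn|$ (with the constant $\frac1{4k^2}$ as slack) gives (\ref{4.2}). For $i\ge k+1$, (\ref{4.3}) is essentially a restatement of the hypotheses (\ref{2.8}) and (\ref{2.8+}): by (\ref{2.8}), $|q_i(m)-q_i(n)|-|m-n|\to\infty$ as $\max(|m|,|n|)\to\infty$ with $m\ne n$, and by the second part of (\ref{2.8+}) together with (\ref{2.9}), $|q_i(m)|-|q_l(m)|-|m|\to\infty$ and likewise with $\tilde n$ ranging over $|\tilde n|\ge\ve|n|$; combining these, for $\max(|m|,|n|)\ge M_\ve$ we get $\min_{l\le i}|q_i(m)-q_l(n)|\ge\min(|m-n|+\ve^{-1},\max(|m|,|n|))$, and taking the max with the symmetric term and using $\ve^{-1}\ge\frac12|m-n|$ for $|m-n|\le\ve^{-1}$ (resp.\ the trivial bound otherwise) yields (\ref{4.3}).

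Part (ii). This is where the mixing and approximation assumptions enter. Fix $i,j$ and $m,n$ with $s_{i,j}(m,n)\ge l$; by symmetry assume $\hat s_{i,j}(m,n)\ge l$, so in particular $|q_i(m)-q_l(n)|\ge l$ for all $l\le j$ and $|m|\ge l$. The plan is to bound $|b_{i,j}(m,n)|=|EY_i(q_i(m))Y_j(q_j(n))|$ by first replacing $Y_i(q_i(m))$ and $Y_j(q_j(n))$ by their $r$-approximations $Y_{i,r}(q_i(m))$, $Y_{j,r}(q_j(n))$ with $r=\lfloor l/3\rfloor$ (say), at a cost controlled by (\ref{3.20}), i.e.\ by $\be_q^\del(r)$ times moment factors; then the approximants are measurable with respect to $\sig$-algebras supported on the $r$-neighborhoods of $\{q_1(m),\dots,q_i(m)\}$ and $\{q_1(n),\dots,q_j(n)\}$ respectively. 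Crucially, the approximant $Y_{i,r}(q_i(m))$ depends on $X_r(q_i(m))$ only through the conditional expectation structure, and since $\int F_i\,d\mu(x_i)\equiv0$ by (\ref{2.16}), we have $E(Y_i(q_i(m))\mid \cF_{\text{others}})$ small — more precisely I would use Proposition \ref{prop3.1} (the two-variable version (\ref{3.3})) to bound $\|E(Y_{i,r}(q_i(m))\mid\cG)\|$ where $\cG$ is the $\sig$-algebra generated by everything except the $r$-neighborhood of $q_i(m)$, the distance from $q_i(m)$ to that set being $\ge l-2r\gtrsim l$; this produces a factor $\vp_{q,p}(\sim l)$ plus a $\be$-term. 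Then $|EY_{i,r}(q_i(m))Y_{j,r}(q_j(n))|\le \|E(Y_{i,r}(q_i(m))\mid\cG)\|_2\,\|Y_{j,r}(q_j(n))\|_2$ by Cauchy–Schwarz and the contraction property, where the second factor is bounded uniformly in $m,n$ by Lemma \ref{lem4.2}/Assumption \ref{ass2.1}. Setting
\[
h(l)=C\big(\be_q^\del(\lfloor l/3\rfloor)+\vp_{q,p}(\lfloor l/3\rfloor)\big)
\]
for a suitable constant $C$ (and $h(l)=C$ for small $l$), the summability $\sum_l l^{4\nu}h(l)<\infty$ follows from (\ref{2.11}) and (\ref{2.12}) after a change of summation index.

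The main obstacle is bookkeeping in part (ii): one must be careful that when $i=j$ and $m$ is close to $n$ the two ``blocks'' $\{q_l(m)\}$ and $\{q_l(n)\}$ may overlap or interleave, so the choice of which $\sig$-algebra plays the role of $\cG$ in Proposition \ref{prop3.1} must be made so that $q_i(m)$ (the index realizing $\hat s_{i,j}(m,n)\ge l$) is genuinely far — distance $\gtrsim l$ — from the union of all the \emph{other} relevant lattice points $q_1(m),\dots,q_{i-1}(m),q_1(n),\dots,q_j(n)$ and their $r$-neighborhoods. That is exactly what the definition of $\hat s_{i,j}$ via $\min_{l\le j}|q_i(m)-q_l(n)|$ combined with the strict monotonicity $|q_1|<\cdots<|q_\ell|$ and (\ref{2.8+}) guarantees, but verifying it uniformly (so that the constant $C$ is independent of $i,j,m,n$) requires invoking the growth conditions (\ref{2.8-})–(\ref{2.9}) once more; I would isolate this as the key geometric observation and then let Proposition \ref{prop3.1} do the analytic work.
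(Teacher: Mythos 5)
There is a genuine gap in your treatment of (\ref{4.2}). Three problems compound here. First, the goal you state --- that one of the four quantities $\min_{l\le j}|im-ln|$, $|m|$, $\min_{l\le i}|jn-lm|$, $|n|$ exceeds $u/4k^2$ where $u=|im-jn|$ --- is weaker than (\ref{4.2}): since $s_{i,j}=\max(\min(\cdot,|m|),\min(\cdot,|n|))$, you need a \emph{conjunction} (the first min over $l$ large \emph{and} $|m|$ large, or the symmetric pair), not a disjunction of four events. Second, the inequality $\hat s_{i,j}(m,n)\ge\min(|im-jn|,|m|)$ is backwards: choosing $l=j$ inside a minimum only yields $\hat s_{i,j}(m,n)\le\min(|im-jn|,|m|)$. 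Third, and most seriously, the hard case is not ``$|m|$ small'' (that case is indeed disposed of by the symmetric term, as you say); it is the case where $|m|$ and $|n|$ are both of order $u$ or larger but $im$ nearly coincides with $\hat ln$ for some $\hat l<j$. For example, with $\nu=1$, $i=1$, $j=2$, $m=100$, $n=99$ one has $u=98$ and both $|m|,|n|\approx u$, yet $\hat s_{1,2}(m,n)=\min(|m-n|,|m-2n|,|m|)=1$. No case analysis on the relative sizes of $|m|,|n|,u$ alone resolves this; one must exploit the near-coincidence itself. The paper's key step is exactly this: assuming $|m|\ge u/2i$ and $|im-\hat ln|<u/4k^2$ with $\hat l<j$, it deduces $|n|\ge u/4k$ and, for every $\tilde l\le i$,
\[
|jn-\tilde lm|=\Big\vert\frac j{\hat l}(\hat ln-im)+\Big(\frac{ij}{\hat l}-\tilde l\Big)m\Big\vert\ \ge\ \Big(\frac{ij}{\hat l}-\tilde l\Big)|m|-\frac j{\hat l}\,|\hat ln-im|\ \ge\ \frac u{4k},
\]
so that the \emph{symmetric} quantity $\hat s_{j,i}(n,m)$ is large. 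Without this multiplicative argument your proof of (\ref{4.2}) is incomplete. Your derivation of (\ref{4.3}) from (\ref{2.8})--(\ref{2.9}) is at the same level of detail as the paper's and is fine.

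Part (ii) of your proposal is essentially the paper's own argument: replace $Y_i,Y_j$ by $r$-approximants with $r$ a fixed fraction of $l$ at cost $C\be_q^\del(r)$, then condition $Y_{i,r}(q_i(m))$ on $\cF_{\Gam_r(m,n)}$ with $\Gam_r(m,n)=\cup_{u<i}U_r(q_u(m))\cup\cup_{v\le j}U_r(q_v(n))$ and apply Proposition \ref{prop3.1} (with $g\equiv 0$ thanks to (\ref{2.16})) followed by Cauchy--Schwarz. Your closing remark correctly isolates the one geometric point that must be checked --- that $q_i(m)$ is also far from $q_1(m),\dots,q_{i-1}(m)$ --- which the paper obtains from (\ref{2.8+}) for $|m|\ge M$, treating the finitely many remaining pairs by a crude constant bound. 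One bookkeeping omission: because of the normalization $|\Gam\cup\Del|^{-1}$ in (\ref{2.2}), the conditional-expectation bound carries a factor $|\Gam_r\cup U_r|\sim r^\nu$, so $h(l)$ must contain $l^\nu\vp_{q,p}(cl)$ rather than $\vp_{q,p}(cl)$; this is precisely why (\ref{2.11}) is stated with the weight $l^{5\nu}$ while (\ref{4.4}) only needs $\sum_l l^{4\nu}h(l)<\infty$.
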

\begin{proof} (i) Let $i,j\leq k$ and set $u=|im-jn|$. Then $i|m|+j|n|\geq u$,
and so either $|m|\geq\frac u{2i}$ or $|n|\geq\frac u{2j}$. Suppose, for 
instance, that $|m|\geq\frac u{2i}$. If $\hat s_{i,j}(m,n)\geq\frac u{4k^2}$
then $s_{i,j}(m,n)\geq\frac u{4k^2}$ and we are done. So assume that 
$s_{i,j}(m,n)<\frac u{4k^2}$. Then $\min_{1\leq l\leq j}|im-ln|<\frac
u{4k^2}$, and so $|im-\hat ln|<\frac u{4k^2}$ for some $\hat l<j$. Then
$i|m|-\hat l|n|<\frac u{4k^2}$, whence 
\[
n>\frac 1{\hat l}(i|m|-\frac u{4k^2})\geq\frac u{2\hat l}-\frac u{4k^2}\geq
\frac u{4k}.
\]
Next, let $\min_{1\leq l<i}|jn-lm|=|jn-\tilde lm|,\,\tilde l<i$. Then
\begin{eqnarray*}
&|jn-\tilde lm|=|\frac j{\hat l}\hat ln-\frac j{\hat l}im+\frac j{\hat l}im-
\tilde lm|\geq(\frac j{\hat l}i-\tilde l)|m|-\frac j{\hat l}|\hat ln-im|\\
&\geq |m|-k\frac u{4k^2}\geq\frac u{2i}-\frac u{4k}\geq\frac u{4k}.
\end{eqnarray*}
It follows that $\hat s_{j,i}(n,m)\geq\frac u{4k}$, and so by above either
$\hat s_{ij}(m,n)\geq\frac u{4k^2}$ or $\hat s_{j,i}(n,m)\geq\frac u{4k}$, 
whence
$s_{i,j}(n,m)\geq\frac u{4k^2}$. The case $|n|\geq\frac u{2j}$ is dealt with
in the same way exchanging $i$ and $j$ as well as $m$ and $n$ in the above
argument.

In order to obtain (\ref{4.3}) we rely on the definition (\ref {4.1}) together
 with the assumptions (\ref{2.8}) and (\ref{2.9}).
 
 (ii) By (\ref{2.8+}) there exists $M$ such that $|q_i(m)-q_l(m)|\geq |m|$
 for all $l<i$ provided $|m|\geq M$. Hence, for such $m$,
 \[
 \min\big(\min_{1\leq l\leq j}|q_i(m)-q_l(n)|,\,\min_{1\leq l<i}|q_i(m)-
 q_l(m)|)\geq\hat s_{i,j}(m,n).
 \]
 Assume that $s_{i,j}(m,n)=\hat s_{i,j}(m,n)\geq 2r$ and set
\[
b^{(r)}_{i,j}(m,n)=EY_{i,r}(q_i(m))Y_{j,r}(q_j(n))
\]
where $Y_{i,r}(l)$ was defined in (\ref{3.5}). It follows from (\ref{2.3}),
(\ref{2.5}), (\ref{2.6})  together with the H\" older inequality (cf. Lemma
3.12 in \cite{KV}) that
\begin{equation}\label{4.5}
|b^{(r)}_{i,j}(m,n)-b_{i,j}(m,n)|\leq C(\be_q(r))^\del
\end{equation}
where a constant $C>0$ does not depend on $i,j,m,n$ and $r$. Set
\[
\Gam_r(m,n)=\cup_{u=1}^{i-1}U_r(q_u(m))\cup_{v=1}^jU_r(q_v(n))
\]
where $U_\rho(x)=\{ y:\, |x-y|\leq\rho\}$. Applying Proposition \ref{prop3.1} 
we conclude that
\begin{eqnarray}\label{4.6}
&|b_{i,j}^{(r)}(m,n)|=\big\vert E\big(E(Y_{i,r}(q_i(m))|\cF_{\Gam_r(m,n)})
Y_{j,r}(q_j(n))\big)\big\vert\\
&\leq\| Y_{j,r}(q_j(n))\|_2\| E(Y_{i,r}(q_i(m))|\cF_{\Gam_r(m,n)})\|_2\leq
Cr^\nu\vp_{q,p}(s_{i,j}(m,n)-2r).\nonumber
\end{eqnarray}
The estimate in the case $s_{i,j}(m,n)=\hat s_{j,i}(n,m)\geq 2r$ is similar.
Now we choose $r=\frac 14s_{i,j}(m,n)$ and take 
\begin{equation}\label{4.6+}
h(l)=C(l^\nu\vp_{p,q}(2l)+(\be_{p,q}(l))^\del)
\end{equation}
provided either $s_{i,j}(m,n)=\hat s_{i,j}(m,n)$ and $|m|\geq M$ or
$s_{i,j}(m,n)=\hat s_{j,i}(n,m)$ and $|n|\geq M$. Next, suppose that
$s_{i,j}(m,n)=\hat s_{i,j}(m,n)$ and $|m|< M$. Then $\hat s_{j,i}(n,m)\leq
s_{i,j}(m,n)<M$, and so either $|n|<M$ or $\min_{1\leq l\leq i}|q_j(n)-
q_l(m)|<M$. In the latter case we must have $|n|\leq N(M)$ for some $N(M)>0$
depending only on $M$. The same argument holds true when $s_{i,j}(m,n)=
\hat s_{j,i}(n,m)$ and $|n|<M$. Thus, when $l\geq M$ we can define $h(l)$ by
(\ref{4.6+}) while for $l<M$ we can take
\[
h(l)=\max\{ |b_{i,j}(m,n)|:\, 0\leq |m|,|n|\leq\max(M,N(M)),\, 1\leq i,j
\leq\ell\}
\]
concluding the proof of the lemma.
\end{proof}

For $s,t\in[0,1]^\nu$ with $s\leq t$ set
\[
\xi_{i,N}(s,t)=N^{-\nu/2}\sum_{n\in\Del_N(s,t)}F_i(X(q_1(n)),...,X(q_i(n))).
\]
Now we can obtain an appropriate estimate of the second moment of $\xi_{i,N}$.

\begin{lemma}\label{lem4.2} There exists $C>0$ such that for all $t=(t_1,...,
t_\nu)\geq s=(s_1,...,s_\nu)\geq 0$ and $i=1,...,\ell$,
\begin{equation}\label{4.7}
 E|\xi_{i,N}(s,t)|^2\le CN^{-\nu}|\Del_N(s,t)|\le C\prod_{l=1}^\nu (t_l-s_l
 +N^{-1}).
\end{equation}
\end{lemma}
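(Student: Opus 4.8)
The plan is to expand the square, reduce to a sum of covariances $b_{i,j}(q_i(m), q_j(n))$ over pairs $m, n \in \Del_N(s,t)$ — here of course with $i = j$ — and then bound that sum by splitting according to the value of $s_{i,i}(m,n)$, invoking the decay estimate \eqref{4.4} from Lemma \ref{lem4.1}(ii). Writing $Y_i(q_i(n)) = F_i(X(q_1(n)),\ldots,X(q_i(n)))$ as in \eqref{3.5}, we have
\[
E|\xi_{i,N}(s,t)|^2 = N^{-\nu}\sum_{m,n\in\Del_N(s,t)} b_{i,i}(m,n),
\]
so the whole task is to show $\sum_{m,n\in\Del_N(s,t)}|b_{i,i}(m,n)| \le C|\Del_N(s,t)|$ with $C$ independent of $N$, $s$, $t$.

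The key step is a counting argument: fix $m$ and count how many $n \in \Del_N(s,t)$ can have $s_{i,i}(m,n) \in [l, l+1)$. For $i \le k$, by \eqref{4.2} we have $s_{i,i}(m,n) \ge \frac{1}{4k^2}|im - in| = \frac{i}{4k^2}|m-n|$, so $s_{i,i}(m,n) < l+1$ forces $|m - n| \le 4k^2(l+1)/i \le 4k^2(l+1)$, and the number of such $n$ is at most the cardinality of a ball of that radius in $\bbZ^\nu$, i.e. $O((l+1)^\nu)$. For $i \ge k+1$, \eqref{4.3} gives (once $\max(|m|,|n|) \ge M_1$, say $\ve = 1$) that $s_{i,i}(m,n) \ge \frac12|m-n|$, yielding the same $O((l+1)^\nu)$ bound on the number of $n$ at distance $s_{i,i} < l+1$ from $m$; the finitely many exceptional pairs with $\max(|m|,|n|) < M_1$ contribute a bounded amount per $m$ and are absorbed into the constant. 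Consequently, for each fixed $m$,
\[
\sum_{n\in\Del_N(s,t)} |b_{i,i}(m,n)| \le \sum_{l=0}^\infty \#\{n : l \le s_{i,i}(m,n) < l+1\}\, h(l) \le C'\sum_{l=0}^\infty (l+1)^\nu h(l),
\]
and since $\sum_l l^{4\nu} h(l) < \infty$ by Lemma \ref{lem4.1}(ii), in particular $\sum_l (l+1)^\nu h(l) < \infty$, this is a finite constant $C$ independent of $m$, $N$, $s$, $t$. Summing over the $|\Del_N(s,t)|$ choices of $m$ gives $E|\xi_{i,N}(s,t)|^2 \le C N^{-\nu}|\Del_N(s,t)|$, which is the first inequality in \eqref{4.7}.

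The second inequality is elementary: $\Del_N(s,t) = \{n \in \bbZ^\nu : Ns_l \le n_l \le Nt_l \ \forall l\}$ has cardinality $\prod_{l=1}^\nu(\lfloor Nt_l\rfloor - \lceil Ns_l\rceil + 1) \le \prod_{l=1}^\nu(N(t_l - s_l) + 1)$, so $N^{-\nu}|\Del_N(s,t)| \le \prod_{l=1}^\nu(t_l - s_l + N^{-1})$. The main obstacle is the counting step — specifically, handling the low-index case $i \ge k+1$ where \eqref{4.3} only holds for $\max(|m|,|n|)$ large, so one must carefully argue that the pairs near the origin are finite in number (depending only on $M_1$, not on $N$) and hence harmless; one should also double-check that when $q_i(m)$ or $q_j(n)$ never equals the relevant argument the term $b$ vanishes, which is automatic from the convention $Y_i(m) = 0$ unless $m = q_i(n)$, but here the summation is already indexed by the $n$'s themselves so this causes no trouble.
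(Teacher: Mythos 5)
Your proposal is correct and follows essentially the same route as the paper: expand $E|\xi_{i,N}(s,t)|^2$ into $N^{-\nu}\sum_{m,n}b_{i,i}(m,n)$, group the pairs by distance, invoke the lower bounds (\ref{4.2})/(\ref{4.3}) on $s_{i,i}$ together with the decay bound (\ref{4.4}), and treat the finitely many near-origin pairs for $i\ge k+1$ separately. The only (immaterial) difference is that you count $n$ in a ball of radius $O(l+1)$, giving $O((l+1)^\nu)$ terms, whereas the paper counts over the shell $l\le|m-n|<l+1$, giving $O((l+1)^{\nu-1})$; both are covered by the summability $\sum_l l^{4\nu}h(l)<\infty$.
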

\begin{proof}
Set $G(m,l)=\{ n\in\bbZ_+^\nu:\, l\leq |m-n|<l+1\}$. Then by (\ref{4.2}) and
(\ref{4.4}) for $i\leq k$,
\begin{eqnarray}\label{4.8}
&E|\xi_{i,N}(s,t)|^2= N^{-\nu}\sum_{m,n\in\Del_N(s,t)}b_{i,i}(m,n)\\
&\leq 2N^{-\nu}\sum^\infty_{l=0}\sum_{m\in\Del_N(s,t)}\sum_{n\in G(m,l)}
|b_{i,i}(m,n)|\nonumber\\
&\leq 2N^{-\nu}\sum^\infty_{l=0}h([\frac l{4k^2}])\sum_{m\in\Del_N(s,t)}
|G(m,l)|.
\nonumber
\end{eqnarray}
If $i>k$ then by (\ref{4.3}) and (\ref{4.4}),
\begin{eqnarray}\label{4.9}
&E|\xi_{i,N}(s,t)|^2\leq N^{-\nu}\sum_{m,n: |m|,|n|\leq M_1}|b_{i,i}(m,n)|\\
&+2N^{-\nu}\sum_{l=0}^\infty\sum_{m\in\Del_N(s,t)}\sum_{n\in G(m,l)}
|b_{i,i}(m,n)|\nonumber\\
&\leq N^{-\nu}(C_1+\sum_{l=0}^\infty h([l/2])\sum_{m\in\Del_N(s,t)}|G(m,l)|)
\nonumber\end{eqnarray}
for some $C_1>0$ independent of $s,t$ and $N$. Clearly, for any 
$l\geq 0$,
 \begin{equation}\label{4.10}
 |G(m,l)|\leq C_2(l+1)^{\nu-1}
 \end{equation}
 for some $C_2>0$ independent of $m$ and $l$ since it is bounded by the
 number of integer points between spheres of radius $l$ and $l+1$. Finally,
 by (\ref{4.8})--(\ref{4.10}) and the summability of $l^{\nu-1} h(l)$ we 
 derive (\ref{4.7}).
 \end{proof}

\begin{lemma}\label{lem4.3}  For each $t >0$ and $i,j\leq k$,
\[
\lim_{ u \to \infty} \limsup_{N\to\infty}N^{-\nu}\sum_{0\le n,n' \le Nt
\atop |in-jn'|\ge u}|b_{i,j}(n,n')|=0.
\]
\end{lemma}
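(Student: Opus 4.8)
The plan is to combine the two parts of Lemma~\ref{lem4.1} with an elementary lattice-point count, the point being that the pairs $(n,n')$ with a prescribed value of $in-jn'$ are few and that $|b_{i,j}(n,n')|$ decays in $|in-jn'|$. First, since $i,j\le k$, the bound (\ref{4.2}) gives $s_{i,j}(n,n')\ge\frac 1{4k^2}|in-jn'|$, so if an integer $l\ge 0$ satisfies $4k^2l\le|in-jn'|<4k^2(l+1)$ then $s_{i,j}(n,n')\ge l$, whence (\ref{4.4}) yields $|b_{i,j}(n,n')|\le h(l)$. Let $l_0=l_0(u)$ denote the least integer with $4k^2(l_0+1)>u$; then $l_0(u)\to\infty$ as $u\to\infty$, and the constraint $|in-jn'|\ge u$ forces the associated index $l=\big[|in-jn'|/(4k^2)\big]$ to be $\ge l_0$. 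Grouping the pairs $(n,n')$ with $0\le n,n'\le Nt$ by this $l$ and using $h\ge 0$, I would obtain
\[
N^{-\nu}\sum_{0\le n,n'\le Nt,\ |in-jn'|\ge u}|b_{i,j}(n,n')|\ \le\ N^{-\nu}\sum_{l\ge l_0}h(l)\,A_N(l),
\]
where $A_N(l)$ is the number of pairs $(n,n')$ with $0\le n,n'\le Nt$ and $4k^2l\le|in-jn'|<4k^2(l+1)$.

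The next step is to estimate $A_N(l)$. For a fixed $n$ the map $n'\mapsto in-jn'$ is injective, so the number of admissible $n'$ is at most the number of lattice points $v\in\bbZ^\nu$ with $4k^2l\le|v|<4k^2(l+1)$, and this spherical shell, of constant thickness $4k^2$ and radius $O(l)$, contains at most $C(l+1)^{\nu-1}$ of them, with $C=C(k,\nu)$. Since the box $\{0\le n\le Nt\}$ has at most $C_tN^\nu$ points for $N\ge 1$, this gives $A_N(l)\le C_t C\,(l+1)^{\nu-1}N^\nu$, hence $N^{-\nu}A_N(l)\le C'(l+1)^{\nu-1}$ uniformly in $N$, $l$ and $u$. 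Substituting,
\[
N^{-\nu}\sum_{0\le n,n'\le Nt,\ |in-jn'|\ge u}|b_{i,j}(n,n')|\ \le\ C'\sum_{l\ge l_0(u)}(l+1)^{\nu-1}h(l),
\]
a bound independent of $N$, so the same estimate holds for $\limsup_{N\to\infty}$ of the left-hand side.

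It then remains to let $u\to\infty$: since $\sum_{l\ge1}l^{4\nu}h(l)<\infty$ by Lemma~\ref{lem4.1}(ii) and $(l+1)^{\nu-1}\le 2^{\nu-1}l^{4\nu}$ for $l\ge1$, the series $\sum_l(l+1)^{\nu-1}h(l)$ converges, so its tail $\sum_{l\ge l_0(u)}(l+1)^{\nu-1}h(l)$ tends to $0$ as $l_0(u)\to\infty$, i.e.\ as $u\to\infty$; this gives the assertion. The argument is essentially routine; the only place that needs a little care is the counting step, namely realizing that fixing $n$ together with the shell for $in-jn'$ pins $n'$ down to only $O((l+1)^{\nu-1})$ choices, and keeping all constants uniform in $N$ (they may, harmlessly, depend on the fixed $t$). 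A minor side point is the bookkeeping relating the threshold $u$ to $l_0(u)$, which is immediate once pairs are grouped by shell index.
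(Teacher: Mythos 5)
Your proof is correct and follows essentially the same route as the paper: both arguments group the pairs $(n,n')$ into spherical shells according to $|in-jn'|$, bound $|b_{i,j}(n,n')|$ on each shell by $h$ of (roughly) $|in-jn'|/(4k^2)$ via (\ref{4.2}) and (\ref{4.4}), count $O((l+1)^{\nu-1})$ admissible $n'$ per shell and per fixed $n$, and conclude from the summability of $l^{\nu-1}h(l)$ that the tail vanishes as $u\to\infty$. The only difference is cosmetic (shells of thickness $4k^2$ rather than $1$), so nothing further is needed.
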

\begin{proof}
Set $G_{i,j}(n,l)=\{ n'\in\bbZ^\nu_+:\, l\leq |in-jn'|<l+1\}$. By 
Lemma \ref{lem4.1},
\begin{eqnarray}\label{4.10+}
&N^{-\nu}\sum_{0\leq n,n'\leq Nt\atop |in-jn'|\ge u}|b_{i,j}(n,n')|\\
&\leq N^{-\nu}\sum_{u\leq l<\infty}\sum_{0\leq n\leq Nt}\sum_{n'\in G_{i,j}
(n,l)}|b_{i,j}(n,n')|\nonumber\\
&\leq N^{-\nu}\sum_{u\leq l<\infty}\sum_{0\leq n\leq Nt}
|G_{i,j}(n,l)|h(\frac l{4k^2})\leq C_3\sum_{u\leq l<\infty}(l+1)^{\nu-1}
h(\frac l{4k^2})\nonumber
\end{eqnarray}
for some constant $C_3>0$ independent of $u$ and $N$.
As $u\to\infty$ the right hand side of (\ref{4.10+}) tends to zero in view
of summability of $l^{\nu}h(l)$ in $l$.
\end{proof}

\begin{proposition}\label{prop4.4} For any $i,j\leq k$ and $s=(s_1,...,s_\nu),
t=(t_1,...,t_\nu)\geq 0$ the limit
\begin{equation}\label{4.11}
\lim_{N\to\infty}N^{-\nu}\sum_{0\le in \le Ns\atop 0\le jn' \le Nt,\,
in-jn'= u} b_{i,j}(n,n')=\frac {\up\prod_{l=1}^\nu\min(s_l,t_l)}{ij}\,c_{i,j}(u)
\end{equation}
exists for any $u\in\bbZ^\nu$ where $\up$ is the greatest common divisor of $i$
and $j$, $c_{i,j}(u)=0$ if $\up$ does not divide all components of $u\in\bbZ^\nu$
and $c_{i,j}(\up\tilde u)$ was defined by (\ref{2.19++}). Finally,
\begin{eqnarray}\label{4.13}
&\lim_{N\to\infty}E\xi_{i,N}(s)\xi_{j,N}(t)=
\lim_{N\to\infty}N^{-\nu}\sum_{0\le in\le Ns\atop 0\le jn'\le Nt}
b_{i,j}(n,n')\\
&=D_{i,j}\prod_{l=1}^\nu\min(s_l,t_l)\nonumber
\end{eqnarray}
with $D_{i,j}$ defined by (\ref{2.19+}).
\end{proposition}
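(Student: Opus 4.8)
The plan is to deduce the covariance formula $(\ref{4.13})$ from the class-by-class limit $(\ref{4.11})$, and to prove $(\ref{4.11})$ itself by pairing an elementary count of the integer solutions of $in-jn'=u$ in the summation box with a decoupling estimate that identifies $\lim b_{i,j}(n,n')$ along those solutions as $c_{i,j}(u)$.

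To prove $(\ref{4.11})$, fix $u\in\bbZ^\nu$. Since $in-jn'=\up(i'n-j'n')\in\up\bbZ^\nu$, where $i'=i/\up$, $j'=j/\up$, $\gcd(i',j')=1$, both sides vanish unless $\up$ divides every component of $u$; so assume $u=\up\tilde u$. The solutions $(n,n')$ of $i'n-j'n'=\tilde u$ form one coset of the rank-$\nu$ lattice $\{(j'w,i'w):w\in\bbZ^\nu\}$, and because $ij'=ji'=ij/\up$ the two constraints $0\le in\le Ns$ and $0\le jn'\le Nt$ confine $w$ to one and the same box, whose $l$-th edge is an interval of length of order $N\min(s_l,t_l)$; an elementary count of its integer points produces a leading term proportional to $N^\nu\prod_{l=1}^\nu\min(s_l,t_l)$ with the constant exhibited in $(\ref{4.11})$. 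Along this coset, as $|n|\to\infty$ the positions $n,2n,\dots,in$ and $n',2n',\dots,jn'$ organise into the $\up$ two-point clusters $\{\al i'n,\al j'n'\}$, $\al=1,\dots,\up$, with $\al i'n-\al j'n'=\al\tilde u$ bounded, these clusters drifting apart from one another, together with the remaining ``solo'' positions, which also drift away from everything (indeed $ln$ and $l'n'$ stay a bounded distance apart exactly when $lj'=l'i'$, i.e.\ $(l,l')=(\al i',\al j')$); note $\up i'=i$, $\up j'=j$, so the top cluster $\al=\up$ carries precisely the last arguments $x_i$ of $F_i$ and $y_j$ of $F_j$, which therefore remain coupled, so the centering $(\ref{2.16})$ is not lost and the limit is the full $c_{i,j}(u)$. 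I would make this precise by fixing a large $r$, replacing each relevant $X(q_\bullet(n))$ by its $r$-approximation $X_r(q_\bullet(n))=E(X(q_\bullet(n))\mid\cF_{U_r(q_\bullet(n))})$ (the change in $b_{i,j}$ being $O(\be_q(r)^\del)$ by $(\ref{2.5})$, $(\ref{2.13})$ and H\"older, as in Lemma 3.12 in \cite{KV} and $(\ref{4.5})$), and then peeling the product structure off one position at a time via repeated use of Proposition~\ref{prop3.1}, at each step conditioning on the $\sig$-algebra generated by the positions already separated; each error is bounded by $\vp_{q,p}$ at distance $\gtrsim|n|$, up to a constant factor finite by Assumption~\ref{ass2.1}, hence tends to $0$ as $|n|\to\infty$. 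In the limit the clusters become mutually independent and independent of the solo positions, the solo positions become i.i.d.\ $\mu$, and inside $\{\al i'n,\al j'n'\}$ the joint law is $\mu_{\al\tilde u}$ by $(\ref{2.4})$; integrating $F_iF_j$ against this product measure is $c_{i,j}(u)$ as defined in $(\ref{2.19++})$. Thus $b_{i,j}(n,n')\to c_{i,j}(u)$ along the coset at a rate depending on $|n|$ only, while the $o(N^\nu)$ coset points at which $|n|$ is small (or at which a spurious near-coincidence occurs) contribute negligibly since $|b_{i,j}|\le h(0)$ uniformly by Lemma~\ref{lem4.1}; multiplying the pointwise limit by the count gives $(\ref{4.11})$.

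For $(\ref{4.13})$, the first equality $E\xi_{i,N}(s)\xi_{j,N}(t)=N^{-\nu}\sum_{0\le in\le Ns,\,0\le jn'\le Nt}b_{i,j}(n,n')$ is immediate from the definitions, and I would split this sum according to the value $u=in-jn'$. Each sub-sum over a fixed $u$, divided by $N^\nu$, converges to $\frac{\up}{ij}\prod_{l=1}^\nu\min(s_l,t_l)\,c_{i,j}(u)$ by $(\ref{4.11})$; to exchange $\lim_N$ with $\sum_u$ I would use Lemma~\ref{lem4.3}, which yields $\lim_{u_0\to\infty}\limsup_{N\to\infty}N^{-\nu}\sum_{|in-jn'|\ge u_0}|b_{i,j}(n,n')|=0$, so the tail over $|u|\ge u_0$ is uniformly small in $N$ once $u_0$ is large while the finitely many classes $|u|<u_0$ converge termwise (letting $N\to\infty$, then $u_0\to\infty$). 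The same estimate, with Lemma~\ref{lem4.1}, gives $\sum_{u\in\bbZ^\nu}|c_{i,j}(u)|<\infty$, so one may sum the pointwise limits and conclude $\lim_N E\xi_{i,N}(s)\xi_{j,N}(t)=\frac{\up}{ij}\big(\sum_{u\in\bbZ^\nu}c_{i,j}(u)\big)\prod_{l=1}^\nu\min(s_l,t_l)=D_{i,j}\prod_{l=1}^\nu\min(s_l,t_l)$ by $(\ref{2.19+})$.

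The step I expect to be the main obstacle is the decoupling inside the proof of $(\ref{4.11})$: turning ``peel one position at a time'' into a single explicit bound $|b_{i,j}(n,n')-c_{i,j}(u)|\le g(|n|)$ with $g(R)\to0$ and $g$ independent of the coset class $u$. Bookkeeping of which conditional $\sig$-algebra to condition on at each stage, and of the moment factors accumulated through Proposition~\ref{prop3.1}, is where the effort concentrates; the lattice-point count and the reduction over $u$ are routine once Lemmas~\ref{lem4.1} and \ref{lem4.3} are available.
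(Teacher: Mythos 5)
Your proposal is correct and follows essentially the same route as the paper: reduction modulo $\up=\gcd(i,j)$, the coordinate-wise Diophantine count giving the factor $\frac{\up}{ij}\prod_l\min(s_l,t_l)$, the grouping of $n,2n,\dots,in,n',\dots,jn'$ into $\up$ bounded pairs $\{\al i'n,\al j'n'\}$ plus separating singletons, the $r$-approximation and iterated application of Proposition~\ref{prop3.1} to identify the limit as $c_{i,j}(u)$ of (\ref{2.19++}), and finally Lemmas~\ref{lem4.1} and \ref{lem4.3} to control the tail over $u$ and pass to (\ref{4.13}). The only cosmetic difference is that the paper takes $r=r(n)\to\infty$ rather than fixing $r$ and letting $|n|\to\infty$ first, which changes nothing of substance.
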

\begin{proof} 
Let $\up$ be the greatest common divisor of $i$ and $j$. If
 $u\in\bbZ^\nu$ has components which are not divisible by $\up$ then the sum
 in (\ref{4.11}) is empty, and so in this case $c_{i,j}(u)=0$. Thus it
 remains to deal with this sum when $in-jn'=\up u$ for $u\in\bbZ^\nu$. 
 We will show first that the limit
\begin{equation}\label{4.15}
c_{i,j}(u)=\lim_{|n|,|n'|\to\infty,\, in-jn'=\up u}b_{i,j}(n,n')
\end{equation}
exists. Observe that if we consider two strings $(n, 2n \ldots, in)$ and
 $(n',2n',\ldots, jn')$ with $in-jn'=\up u$ then there will also be pairs
  $(i_\alpha, j_\alpha)$, $\alpha=1,2,\ldots, \up-1$ such that $i_\alpha
  n-j_\alpha\,n'=\alpha  u$ where $i_\al=\al i_1$ and $j_\al=\al j_1$
  with $i_1$ and $j_1$ being coprime. On the other hand, if $\tilde i/
  \tilde j\ne i_1/j_1$ then 
 \begin{eqnarray}\label{4.16}
 &|\tilde in-\tilde jn'|=\tilde j\big\vert\frac {\tilde i}{\tilde j}n-
 n'\big\vert =\tilde j\big\vert (\frac {\tilde i}{\tilde j}-
\frac {i_1}{j_1})n+\frac 1{j_1}(i_1n-j_1n')\big\vert\\
&\geq |n|\tilde j\big\vert\frac {\tilde i}{\tilde j}-\frac {i_1}{j_1}\big\vert 
-\frac {\tilde j}{j_1}|u|\rightarrow\infty\,\,\mbox{as}\,\, |n|\to\infty.
\nonumber\end{eqnarray} 

We split the collection of numbers $(n,2n,...,in;n',2n',...,jn')$ into 
disjoint sets $\Gam_1,...,\Gam_\up,...,\Gam_{i+j-\up}$ where $\Gam_\al=
\{\al i_1,\al j_1\},\,\al=1,...,\up$ are pairs and $\Gam_{\up+\be},\,\be=
1,2,...,i+j-\up$ are singeltons. We order the latter so that $\Gam_\be=
\{ l_\be n\},\, 1\leq l_\be <i,\,\be=\up+1,...,i$ with $l_\be\ne\al i_1$
for $\al=1,...,\up$ and $\Gam_{\up+\be}=\{{l'}_\be n'\}$ for $\be=i+1,...,
i+j-\up$, $1\leq {l'}_\be<i,\, {l'}_\be\ne\al j_1$ for $\al=1,...,\up$.
By (\ref{4.16}) there is $\del>0$ depending on $u$ but not on $n$ and $n'$
such that
\[
\min_{1\leq l\ne l'\leq i+j-\up}\mbox{dist}(\Gam_l,\Gam_{l'})\geq\del n.
\]
Set
\[
U_r(\Gam_l)=\{ n\in\bbZ^\nu_+:\,\mbox{dist}(n,\Gam_l)\leq r\},\,\, l=1,2,...,
i+j-\up
\]
and choose $r=r(n)\to\infty$ as $|n|\to\infty$ so that all $U_{r(n)}(\Gam_l),\,
l=1,2,...,i+j-\up$ were disjoint.

Now, observe that $b_{i,j}(n,n')$ has the form $EG(Y_1(n,n'),Y_2(n,n'),...,
Y_{i+j-\up}(n,n'))$ where $Y_\al(n,n')=(X(\al i,n),X(\al j,n'))$ for 
$\al=1,...,\up$, $Y_\be(n,n')=X(l_\be n)$ for $\be=\up+1,...,i$ and
$Y_\be(n,n')=X(l^\prime_\be n')$ for $\be=i+1,...,i+j-\up$. Define $G_1=G$ 
and successively
\[
G_{l+1}(y_{l+1},y_{l+2},...,y_{i+j-\up})=EG(Y_l(n,n'),y_{l+1},y_{l+2},...,
y_{i+j-\up}).
\]
Relying on the assumptions (\ref{2.11}) and (\ref{2.12}) we can apply 
(\ref{3.3}) of Proposition \ref{prop3.1} with $V=(Y_{l+1}(n,n'),Y_{l+2}(n,n'),...,
Y_{i+j-\up}(n,n'))$, $Z=Y_l(n,n')$, $\cG=\cF_{\tilde U}$ with $\tilde U=
\cup_{l+1\leq\be\leq i+j-\up}U_{r(n)}(\Gam_\be)$ and $\cH=\cF_{U_l(r(n))}$ 
obtaining that
\begin{eqnarray*}
&EG_l(Y_l(n,n'), Y_{l+1}(n,n'),...,Y_{i+j-\up}(n,n'))\\
&-EG_{l+1}(Y_{l+1}
(Y_{l+1}(n,n'),...,Y_{i+j-\up}(n,n'))\to 0\,\,\mbox{as}\,\, n\to\infty.
\end{eqnarray*}
This argument repeated for $l=1,2,...,i+j-\up-1$ yields (\ref{4.15})
with $c_{i,j}$ given by (\ref{2.19++}).

Finally, in order to obtain (\ref{4.11}) and (\ref{4.13}) we have to 
count the number of solutions $n,n'$ of the vector Diophantine equation
$in-jn'=\up u$ where $0\leq in\leq Ns$ and $0\leq jn'\leq Nt$. Since we
have to satisfy this equation coordinate wise, the number of solutions is
the product of the number of solutions in each coordinate. 
Let, as before, $i=\up i_1$ and $j=\up j_1$ with $i_1$ and $j_1$ being 
coprime then all solutions
of the equation $i_1n_l-j_1{n'}_l=u_l$ are given by $n=n_0+j_1m$ and $n'=
n'_0+i_1m$ where $n_0,n'_0$ is its particular solution and $m$ is any integer.
The number of such solutions with $0\leq in_l\leq Ns_l$ and $0\le j{n'}_l
\le Nt_l$ for large $N$ is equal approximately to
\[
N\min(\frac {t_l}{ij_1},\frac {s_l}{ji_1})+O(1)=\frac {N\up\min(s_l,t_l)}{ij}+O(1)
\]
and taking the product in $l$ we obtain (\ref{4.11}) while (\ref{4.13}) follows
from (\ref{4.11}) and Lemmas \ref{lem4.1} and \ref{lem4.3}.
\end{proof}

\begin{proposition}\label{prop4.5} For $i\ge k+1$, 
\begin{equation}\label{4.17}
\lim_{N\to\infty}E\big |\xi_{i,N}(t)|^2=(\prod_{l=1}^\nu t_l)\int\big(F_i(x_1, 
x_2,\ldots, x_i)\big)^2d\mu(x_1)d\mu(x_2)\cdots d\mu(x_i).
\end{equation}
Moreover, for any $t,s\in\bbR^\nu_+$ and $j<i$,
\begin{equation}\label{4.18}
\lim_{N\to\infty}E\big (\xi_{i,N}(t)\xi_{j,N}(s)\big )=0.
\end{equation}
\end{proposition}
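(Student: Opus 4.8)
The plan is to evaluate both limits through the second–moment identities
$E|\xi_{i,N}(t)|^2=N^{-\nu}\sum_{m,n\in\Del_N(t)}b_{i,i}(m,n)$ and
$E\big(\xi_{i,N}(t)\xi_{j,N}(s)\big)=N^{-\nu}\sum_{m\in\Del_N(t),\,n\in\Del_N(s)}b_{i,j}(m,n)$, where $b_{i,j}(m,n)=EY_i(q_i(m))Y_j(q_j(n))$, and to combine two tools already available. The first is a \emph{decoupling--centering} estimate: if the point $q_i(m)$ lies at distance $\ge\rho$ from \emph{every} other point occurring in $b_{i,j}(m,n)$, i.e.\ from $\{q_l(m):l<i\}\cup\{q_l(n):l\le j\}$, then, conditioning on an $r$--approximated $\sig$--algebra $\cF_\Gam$ (with $r\sim\rho/4$) that surrounds those points but avoids $q_i(m)$, replacing $Y_i(q_i(m))$ by its conditional expectation, and invoking Proposition \ref{prop3.1} together with the approximation bound (\ref{3.20}), one obtains $|b_{i,j}(m,n)|\le C\big(\rho^\nu\vp_{q,p}(\rho/2)+\be^\del_q(\rho/4)\big)$; the decisive point is that the ``main term'' $g(x_1,\dots,x_{i-1})=E\,F_i(x_1,\dots,x_{i-1},X(q_i(m)))=\int F_i(x_1,\dots,x_{i-1},x_i)\,d\mu(x_i)$ vanishes identically by (\ref{2.16}). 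The second tool is the domination $|b_{i,j}(m,n)|\le h(s_{i,j}(m,n))$ of Lemma \ref{lem4.1}(ii), with $\sum_l l^{4\nu}h(l)<\infty$, used to absorb tails.

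For (\ref{4.17}) I split the double sum into the diagonal $m=n$ and the off--diagonal part. On the diagonal, $b_{i,i}(n,n)=E\big(F_i(X(q_1(n)),\dots,X(q_i(n)))\big)^2$; by the ordering $|q_1(n)|<\dots<|q_i(n)|$ and (\ref{2.8+}) all pairwise distances among $q_1(n),\dots,q_i(n)$ tend to $\infty$ with $|n|$, so the successive conditioning scheme used for (\ref{4.15}) --- now applied to $F_i^2$ --- gives $b_{i,i}(n,n)\to\int F_i^2\,d\mu(x_1)\cdots d\mu(x_i)$, and since these terms are bounded, $N^{-\nu}\sum_{n\in\Del_N(t)}b_{i,i}(n,n)\to(\prod_l t_l)\int F_i^2\,d\mu(x_1)\cdots d\mu(x_i)$. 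For the off--diagonal part I fix $v\in\bbZ^\nu\setminus\{0\}$: by (\ref{2.8}) one has $|q_i(m)-q_i(m-v)|\to\infty$, while (\ref{2.8+}) and (\ref{2.9}) (for a fixed $\ve\in(0,1)$) show $q_i(m)$ is eventually at distance $\to\infty$ from $\{q_l(m):l<i\}\cup\{q_l(m-v):l\le i\}$; the decoupling--centering estimate then yields $b_{i,i}(m,m-v)\to 0$, hence $N^{-\nu}\sum_m b_{i,i}(m,m-v)\to 0$. Summing over $|v|<u$ (finitely many shifts), bounding the tail $|v|\ge u$ by $|b_{i,i}(m,m-v)|\le h(\lceil\tfrac12|v|\rceil)$ via (\ref{4.3}) for $|m|$ large together with a crude estimate for the finitely many small $m$, and using $\sum_l l^{\nu-1}h(l)<\infty$, I get $\limsup_N N^{-\nu}\sum_{m\ne n}|b_{i,i}(m,n)|=0$. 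This gives (\ref{4.17}).

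For (\ref{4.18}) with $j<i$ the key feature is that $q_i(m)$ is now strictly the fastest index present, so the decoupling--centering estimate kills $b_{i,j}(m,n)$ for every pair in which $q_i(m)$ does not collide with another point. Fix a slowly growing threshold $\rho_N=N^{\al}$ with $\al>0$ small. Call $(m,n)$ \emph{regular} if $q_i(m)$ is at distance $\ge\rho_N$ from $\{q_l(m):l<i\}\cup\{q_l(n):l\le j\}$; then $|b_{i,j}(m,n)|\le C\big(\rho_N^\nu\vp_{q,p}(\rho_N/2)+\be^\del_q(\rho_N/4)\big)$, and since there are at most $|\Del_N(t)||\Del_N(s)|=O(N^{2\nu})$ regular pairs, their contribution after the $N^{-\nu}$ scaling is $O\big(N^\nu(\rho_N^\nu\vp_{q,p}(\rho_N/2)+\be^\del_q(\rho_N/4))\big)$, which tends to $0$ once $\al$ is chosen using the weighted summability in (\ref{2.11})--(\ref{2.12}). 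The \emph{irregular} pairs are those in which $q_i(m)$ falls within $\rho_N$ of some $q_l(m)$, $l<i$ --- by (\ref{2.8+}) this confines $|m|$ to $O(\rho_N)$ --- or within $\rho_N$ of some $q_l(n)$, $l\le j$; treating these by a short case analysis, one uses (\ref{2.8+}) and (\ref{2.9}) to see that either $|m|$ or $|n|$ is $O(\rho_N)$, or $q_i(m)$ and $q_j(n)$ nearly coincide, and in that last case (\ref{2.9}) forbids $|m|$ from being comparable to $|n|$ and forces $|q_i(m)|$ to be large relative to $|m|$, so that by injectivity of $q_i$ (part of (\ref{2.8+})) the admissible $m\in\Del_N(t)$ form a set of cardinality $o(N^\nu)$; counting partners of each such $m$ by injectivity of the $q_l$'s, the irregular pairs number $o(N^{2\nu})$, with enough room against the $N^{-\nu}$ scaling and the bounds on $b_{i,j}$, so their contribution also vanishes. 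Adding the two parts yields (\ref{4.18}).

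The main obstacle is exactly this last step: showing that the irregular pairs --- those for which the fastest index $q_i(m)$ lands near the slower string $q_1(n),\dots,q_j(n)$, in particular the near--coincidence $q_i(m)\approx q_j(n)$ --- are too sparse to survive the $N^{-\nu}$ normalisation. This is where (\ref{2.8+}) and (\ref{2.9}) are indispensable: (\ref{2.9}) rules out such resonances with $|m|$ comparable to $|n|$ and forces any surviving resonance to have $|q_i(m)|$ much larger than $|m|$, whence by injectivity of $q_i$ the corresponding $m$'s have vanishing density in $\Del_N(t)$; balancing this sparsity against the mixing decay (\ref{2.11})--(\ref{2.12}) through the size of $\rho_N$ is the delicate bookkeeping. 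Everything else reduces to the decoupling--centering estimate (an immediate consequence of Proposition \ref{prop3.1} and (\ref{2.16})), already exploited for the diagonal of (\ref{4.17}) and in the proof of Proposition \ref{prop4.4}, together with Lemma \ref{lem4.1}(ii) for the tails.
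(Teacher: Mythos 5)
Your treatment of (\ref{4.17}) is sound and essentially coincides with the paper's: the diagonal is handled by the successive--conditioning scheme of Proposition \ref{prop4.4} applied to $F_i^2$, and the off--diagonal part by the pointwise decay of $b_{i,i}(m,n)$ at each fixed separation together with the summability of $l^{\nu-1}h(l)$ from Lemma \ref{lem4.1}(ii); only the bookkeeping (shifts $v$ versus shells $|m-n|\le L$) differs.

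The argument for (\ref{4.18}) has a genuine gap, in two places. First, your regular/irregular dichotomy uses a polynomially growing threshold $\rho_N=N^\al$, but the hypotheses (\ref{2.8})--(\ref{2.9}) are purely qualitative: they assert that the relevant separations tend to infinity with no rate. You therefore cannot conclude that $q_i(m)$ is at distance $\ge\rho_N$ from the other points for ``most'' pairs; for all the assumptions allow, the separations could grow like $\log\log|n|$ and every pair would be irregular. Second, and independently, the bookkeeping for the irregular pairs does not close: a family of cardinality $o(N^{2\nu})$ with each $|b_{i,j}(m,n)|$ bounded only by a constant contributes $o(N^{2\nu})\cdot N^{-\nu}\cdot O(1)=o(N^{\nu})$ after normalisation, which is useless. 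Already the pairs with $|m|\le\rho_N$ number $\asymp\rho_N^\nu N^\nu$ and give $O(\rho_N^\nu)\to\infty$ under term-by-term estimation. To kill such blocks one must exploit cancellation rather than counting: the paper bounds $\big|\sum_{|m|\le\ve N,\,n}b_{i,j}(m,n)\big|=\big|E\big(\sum_{|m|\le\ve N}Y_i(q_i(m))\big)\big(\sum_n Y_j(q_j(n))\big)\big|$ by Cauchy--Schwarz and Lemma \ref{lem4.2}, obtaining $O(\ve^{\nu/2}N^\nu)$, i.e.\ $O(\ve^{\nu/2})$ after normalisation; the remaining pairs with $|m|>\ve N$ (for a \emph{fixed} $\ve$, so that (\ref{2.9}) applies in its qualitative form and yields $s_{i,j}(m,n)\ge\min(|m-n|+\ve^{-1},|m|)$ for large $N$) are then summed over shells in $|m-n|$ using $h$, giving $C\sum_{l\ge\min(\ve^{-1},\ve N)}l^{\nu-1}h(l)$; letting $N\to\infty$ and then $\ve\to0$ finishes. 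Your sparsity claim for the resonances $q_i(m)\approx q_l(n)$ is also unsupported: injectivity of $q_i$ bounds the number of $m$ landing in a fixed $\rho_N$-ball by $O(\rho_N^\nu)$, but there are $N^\nu$ such balls (one per $n$), so the union of resonant $m$'s can a priori exhaust $\Del_N(t)$.
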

\begin{proof} By (\ref{2.8})--(\ref{2.9}), (\ref{4.3}) and (\ref{4.4}),
\[
b_{i,i}(m,n)\to 0\,\,\mbox{as}\,\,\max(|m|,|n|)\to\infty\,\,\mbox{with}\,\,
|m-n|\geq 1.
\]
Therefore, by (\ref{4.3}) and (\ref{4.4}) for any fixed $L$,
\begin{eqnarray*}
&\limsup_{N\to\infty}N^{-\nu}\sum_{m,n\in\Del_N(t),\, m\ne n}|b_{i,i}(m,n)|\\
&\leq\limsup_{N\to\infty}N^{-\nu}\big(\sum_{1\leq |m-n|\leq L}|b_{i,i}(m,n)|\\
&+C|\Del_N(t)|\sum_{l\geq L}l^{\nu-1}h(l)\big)=\tilde C
\sum_{l\geq L/2}l^{\nu-1}h(l)
\end{eqnarray*}
for some $C,\tilde C>0$ independent of $N$ and $L$. We now let $L\to\infty$ and since 
 $l^{\nu-1}h(l)$ is summable it follows that $\limsup$ in the left hand side
above equals zero, i.e. the off-diagonal terms do not contribute in 
(\ref{4.17}). It remains to deal with the diagonal terms $b_{i,i}(n,n)$.
Since $|q_j(n)-q_{j-1}(n)|\to\infty$ for $j=2,3,...,\ell$ as $|n|\to\infty$
it follows by the argument similar to one applied in the proof of Proposition
\ref{prop4.4} (see a more general Lemma 4.3 in \cite{KV}) that 
\begin{equation}\label{4.20}
\lim_{|n|\to\infty}b_{i,i}(n,n)=\int\big(F_i(x_1,x_2,...,x_i)\big)^2d\mu(x_1)
\cdots d\mu(x_i).
\end{equation}
 Namely, set
$G_i(x_1,...,x_i)=\big(F_i(x_1,x_2,...,x_i)\big)^2$ and recursively for
$l=i-1,...,2,1,0$, 
\[
G_l(x_1,...,x_l)=\int G_{l+1}(x_1,...,x_{l+1})d\mu(x_{l+1}).
\]
Taking into account that $|q_l(n)-q_{\tilde l}(n)|\to\infty$ as $|n|\to\infty$ 
when $l\ne\tilde l$ we apply Proposition \ref{prop3.1} to obtain successively 
for $l=i,i-1,...,1,0$ that
\[
\big\vert EG_{l+1}\big(X(q_1(n)),...,X(q_l(n))\big)-EG_{l+1}\big(X(q_1(n)),...,
X(q_l(n))\big)\big\vert\to 0\,\,\mbox{as}\,\, |n|\to\infty.
\]
Since $b_{i,i}(n,n)=G_0$ we arrive at (\ref{4.20}).

Next, we deal with (\ref{4.18}). Since $i>j$ and $i>k$ then by 
(\ref{2.8})--(\ref{2.9}) for any $\ve>0$ there exists $N(\ve)$ such that
 whenever $|m|>\ve N$ and $|n|\leq N\sqrt\nu$ we have $s_{i,j}(m,n)
 \geq\min(|m-n|+\ve^{-1},|m|)$ provided $N\geq N(\ve)$ where $s_{i,j}(m,n)$ is the same
 as in Lemma \ref{lem4.1}. It follows from Lemmas \ref{lem4.1} and 
 \ref{lem4.2} that
 \begin{eqnarray}\label{4.21}
 &|E\xi_{i,N}(t)\xi_{j,N}(s)|\leq \big\vert\sum_{|m|\leq\ve N,\, n\in\Del_N(s)}
 b_{i,j}(m,n)\big\vert\\
 & +N^{-\nu}\sum_{|m|>\ve N,\, m\in\Del_N(t),n\in\Del_N(s)}
 |b_{i,j}(m,n)|\nonumber\\
 &\leq N^{-\nu}\big(\sum_{|m|\leq\ve N}EY_{i,m}^2\big)^{1/2}
 \big(\sum_{n\in\Del_N(s)}EY^2_{j,n}\big)^{1/2}\nonumber\\
 &+C\sum_{l\geq\min(\ve^{-1},\ve N)}l^{\nu-1} h(l)
 \leq C\sqrt{\ve}+C\sum_{l\geq\min(\ve^{-1},\ve N)}l^{\nu-1} h(l)\nonumber
 \end{eqnarray}
 for some $C>0$ independent of $N$ and $\ve$. Letting in (\ref{4.21}), 
 first $N\to\infty$ and then $\ve\to 0$ we arrive at (\ref{4.18}).
\end{proof}

\section{Tightness estimates}\label{sec5}\setcounter{equation}{0}

First, we will extend the estimate of Lemma \ref{lem4.2} to the corresponding
estimate of the 4th moment.
\begin{lemma}\label{lem5.1} There exists $C>0$ such that for all $t=(t_1,...,
t_\nu)\geq s=(s_1,...,s_\nu)\geq 0$ and $i=1,...,\ell$,
\begin{equation}\label{5.1}
 E|\xi_{i,N}(s,t)|^4\le CN^{-2\nu}|\Del_N(s,t)|^2\le C\prod_{l=1}^\nu (t_l-s_l
 +N^{-1})^2.
\end{equation}
\end{lemma}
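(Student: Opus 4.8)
The plan is to bound the fourth moment
\[
E|\xi_{i,N}(s,t)|^4=N^{-2\nu}\sum_{m_1,m_2,m_3,m_4\in\Del_N(s,t)}EY_i(q_i(m_1))Y_i(q_i(m_2))Y_i(q_i(m_3))Y_i(q_i(m_4))
\]
by classifying the quadruples $(m_1,m_2,m_3,m_4)$ according to how the four points $q_1(m_a),\ldots,q_i(m_a)$, $a=1,2,3,4$, cluster in $\bbZ^\nu$. The guiding principle, exactly as in Lemma~\ref{lem4.2}, is that whenever one of the four index points $m_a$ is ``far'' (in the sense of the quantities $s_{i,i}$ of Lemma~\ref{lem4.1}) from the other three, the corresponding factor $Y_i(q_i(m_a))$ is nearly centered given the $\sigma$-algebra generated by the rest, so its contribution is small; more precisely, applying Proposition~\ref{prop3.1} (after passing to the $r$-approximations $Y_{i,r}$ as in \eqref{4.5}) shows that such a term is bounded by $h(l)$ where $l$ measures that separation. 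So the only quadruples that contribute a non-negligible amount are those in which the four blocks of points group into \emph{two} pairs of mutually close points (or all four close together).

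Concretely, first I would reduce to $Y_{i,r}$ approximations using \eqref{3.20}/\eqref{4.5} and the moment bounds from Assumption~\ref{ass2.1}, paying an error that is summable after multiplication by the polynomial volume factors (this is the same bookkeeping already done in \eqref{3.23}). Then I would split the sum over quadruples into: (a) quadruples where all pairwise ``distances'' $s_{i,i}(m_a,m_b)$ exceed some threshold $L$ — here repeated application of Proposition~\ref{prop3.1} factorizes the expectation into a product of (almost) $EY_{i}=0$ terms, giving a bound $C|\Del_N(s,t)|^3 N^{-2\nu}\sum_{l\ge L}l^{\nu-1}h(l)$, which is $o(N^{-2\nu}|\Del_N(s,t)|^2)$ once we recall $|\Del_N(s,t)|\le (N\sqrt\nu+1)^\nu$ and $\sum l^{\nu-1}h(l)<\infty$; (b) quadruples forming exactly two close pairs, say $\{m_1,m_2\}$ and $\{m_3,m_4\}$ each internally within distance $L$ but the two pairs separated — here one uses Proposition~\ref{prop3.1} once to decouple the two pairs up to a summable error, reducing to a product of two second-moment sums each controlled by Lemma~\ref{lem4.2}, giving $\le C(N^{-\nu}|\Del_N(s,t)|)^2$, the main term; and (c) quadruples with three or four points mutually close, whose number is $O(|\Del_N(s,t)|^2)$ (one free point, and the rest confined to a bounded neighborhood after summing the tail of $h$), again bounded by $CN^{-2\nu}|\Del_N(s,t)|^2$. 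Summing the three cases and using $|\Del_N(s,t)|\le\prod_l(N(t_l-s_l)+1)=N^\nu\prod_l(t_l-s_l+N^{-1})$ yields \eqref{5.1}.

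The combinatorial heart — and the main obstacle — is case (a)/(b): making precise the claim that if the four blocks split into two well-separated groups then the mixed expectation factorizes up to a controllable error. This requires a careful choice of the separating radius $r=r(m_1,m_2,m_3,m_4)$ (a fixed fraction of the minimal inter-group distance, as in the choice $r=\frac14 s_{i,j}$ in Lemma~\ref{lem4.1}), verifying that the neighborhoods $U_r$ of the two groups are disjoint, and then peeling off one group at a time via the conditional-expectation estimate \eqref{3.2}–\eqref{3.3}; the polynomial loss $r^\nu$ (really $l^\nu$ in $h$) from each application is absorbed by the summability hypotheses \eqref{2.11}–\eqref{2.12} reinforced by the $5\nu$ and $4\nu$ powers there. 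One must also handle the bookkeeping of which of the $q_1,\ldots,q_i$ images can collide — but this is exactly what conditions \eqref{2.8+}–\eqref{2.9} are designed to rule out for large $|m_a|$, with the finitely many small-$|m_a|$ quadruples contributing a bounded constant as in the definition of $h(l)$ for $l<M$. Once the factorization is in hand, the rest is the same routine volume counting as in the proof of Lemma~\ref{lem4.2}.
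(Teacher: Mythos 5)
Your overall strategy --- expand the fourth moment over quadruples, pass to the $r$-approximations, decouple well-separated points via Proposition \ref{prop3.1}, and count --- is the right one, and your cases (b) and (c) can be made to work essentially as you describe. But there is a genuine gap in case (a), the quadruples $(n^{(1)},\dots,n^{(4)})$ in which all four points are mutually separated (in the $s_{i,i}$-sense) by more than a fixed threshold $L$. Peeling off one factor with Proposition \ref{prop3.1} gives $E\,Y_1Y_2Y_3Y_4=EY_4\cdot E\,Y_1Y_2Y_3+O(h(l_4))$, where $l_4$ is the isolation of the peeled point: the error is \emph{additive}, not multiplicative, so it carries no smallness in the other three indices. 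Summing $h(l_4)$ over $n^{(4)}$ gives $O\big(\sum_{l\ge L}l^{\nu-1}h(l)\big)=O(1)$ per choice of $(n^{(1)},n^{(2)},n^{(3)})$, hence a total contribution of order $|\Del_N(s,t)|^3$, exactly the bound you state --- but $|\Del_N|^3\sum_{l\ge L}l^{\nu-1}h(l)$ is \emph{not} $o(|\Del_N|^2)$ for a fixed $L$: it exceeds the target by a factor $|\Del_N|\sim N^\nu$. (Letting $L=L(N)\to\infty$ to shrink the tail would in turn inflate the counts in (b) and (c), and your trichotomy also omits the cluster pattern ``one close pair plus two isolated singletons,'' though that one is repairable.) The difficulty is structural: one application of the conditional-expectation estimate gains one small factor, whereas passing from $|\Del_N|^4$ potential quadruples to $|\Del_N|^2$ requires gaining two.

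The paper closes precisely this case by a multi-scale counting argument instead of a fixed-threshold trichotomy. For each quadruple it sets $v_i$ equal to the \emph{maximum} over $j=1,\dots,4$ of the isolation of $q_i(n^{(j)})$ from all other points of the configuration, and stratifies by $a\le v_i<a+1$. On each stratum only the single-point decoupling is used, giving the per-term bound $q(a)=C_1(\be_q(a/3))^\del+C_3a^\nu\vp_{q,p}(a/3)$ as in (\ref{5.6}); the ingredient your argument lacks is the companion counting estimate (\ref{5.4}): since $v_i<a+1$ bounds the maximum, \emph{every} one of the four points is within $a+1$ of another point of the configuration (or satisfies $|n^{(j)}|\lesssim a$), so the closeness graph on $\{1,2,3,4\}$ has no isolated vertex, hence at most two components, hence at most two free summation indices; therefore $|\Gam_a(N,s,t)|=O\big(a^{2\nu}(1+a^{2\nu}+|\Del_N(s,t)|^2)\big)$. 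The widely separated quadruples that defeat your case (a) are numerous only on strata with large $a$, where $q(a)$ is correspondingly tiny, and Assumption \ref{ass2.1} gives $\sum_a a^{4\nu}q(a)<\infty$, which closes the estimate. I would recommend replacing your fixed-$L$ case analysis by this stratification; the rest of your outline (the reduction to $Y_{i,r}$, the treatment of colliding $q_l$-images via (\ref{2.8+})--(\ref{2.9}), and the final volume count) then matches the paper.
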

\begin{proof}
For $n^{(1)},n^{(2)},n^{(3)},n^{(4)}\in\bbZ^\nu_+$ set 
\[
d_i(n^{(1)},n^{(2)},n^{(3)},n^{(4)})=EY_{i,q_i(n^{(1)})}Y_{i,q_i(n^{(2)})}
Y_{i,q_i(n^{(3)})}Y_{i,q_i(n^{(4)})}
\]
and for $r>0$,
\[
d_i^{(r)}(n^{(1)},n^{(2)},n^{(3)},n^{(4)})=EY_{i,q_i(n^{(1)}),r}
Y_{i,q_i(n^{(2)}),r}Y_{i,q_i(n^{(3)}),r}Y_{i,q_i(n^{(4)}),r}.
\]
Then similarly to (\ref{4.5}),
\begin{equation}\label{5.2}
|d_i^{(r)}(n^{(1)},n^{(2)},n^{(3)},n^{(4)})-d_i(n^{(1)},n^{(2)},n^{(3)},
n^{(4)})|\leq C_1(\be_q(r))^\del
\end{equation}
where $C_1>0$ does not depend on $n^{(1)},n^{(2)},n^{(3)},n^{(4)}$ and $r$.
Define
\begin{eqnarray*}
&v_i(n^{(1)},n^{(2)},n^{(3)},n^{(4)})=\max_{1\leq j\leq 4}\big(\min
\big(\min_{l<i}|q_i(n^{(j)})-q_l(n^{(j)})|,\\
&\min_{\tilde j\ne j,l\leq i}|q_i(n^{(j)})-q_l(n^{(\tilde j)})|\big)\big).
\end{eqnarray*}
Without loss of generality assume that
\begin{equation}\label{5.3}
v_i(n^{(1)},n^{(2)},n^{(3)},n^{(4)})=\min\big(\min_{l<i}|q_i(n^{(j)})-
q_l(n^{(j)})|,\min_{\tilde j\ne j,l\leq i}|q_i(n^{(j)})-q_l(n^{(\tilde j)})|
\big).
\end{equation}
For each $a\geq 0$ introduce the sets
\[
\Gam_a=\{ n^{(1)},n^{(2)},n^{(3)},n^{(4)}\in\bbZ_+:\, a\leq v_i(n^{(1)},
n^{(2)},n^{(3)},n^{(4)})<a+1\}
\]
and
\[
\Gam_a(N,s,t)=\{ (n^{(1)},n^{(2)},n^{(3)},n^{(4)})\in\Gam_a:\, 
n^{(1)},n^{(2)},n^{(3)},n^{(4)}\in\Del_N(s,t)\}.
\]

If $i\leq k$ and $(n^{(1)},n^{(2)},n^{(3)},n^{(4)})\in\Gam_a$ then for 
$j=2,3,4$,
\[
\mbox{either}\,\, |n^{(j)})<a+1\,\,\mbox{or}\,\, |in^{(j)}-ln^{(\tilde j)}|
<a+1\,\,\mbox{for some}\,\, l=1,...,i\,\,\mbox{and}\,\,\tilde j\ne j.
\]
It follows that
\begin{equation}\label{5.4}
|\Gam_a(N,s,t)|\leq C_2a^2(1+a^2+|\Del_N(s,t)|^2)
\end{equation}
for some $C_2>0$ independent of $a,N,s$ and $t$. If $i\geq k+1$ then by
(\ref{2.8})--(\ref{2.9}) there exists $M>0$ such that whenever $|n|\geq M$,
\begin{equation}\label{5.5}
\min_{l<i}|q_i(n)-q_l(n)|\geq |n|\,\,\mbox{and}\,\,\min_{\tilde i\leq i,\,
\tilde n\ne n}|q_i(n)-q_{\tilde i}(\tilde n)|\geq |n-\tilde n|.
\end{equation}
Then similarly to the case $i\leq k$ we conclude from (\ref{5.5}) that 
(\ref{5.4}) holds true also when $i\geq k+1$.

Next, let $r=a/3$ and $(n^{(1)},n^{(2)},n^{(3)},n^{(4)})\in\Gam_a$ satisfy
(\ref{5.3}). Set
\[
\Psi_r(n^{(1)},n^{(2)},n^{(3)},n^{(4)})=\cup_{u=1}^{i-1}U_r(q_u(n^{(1)}))
\cup\big(\cup_{j=2}^4\cup_{v=1}^iU_r(q_v(n^{(j)})\big).
\]
Then by Proposition \ref{prop3.1} similarly to (\ref{4.6}) we derive that
\begin{eqnarray}\label{5.6}
&|d_i^{(r)}(n^{(1)},n^{(2)},n^{(3)},n^{(4)})|\leq\big\vert E\big( E(Y_{i,
q_i(n^{(1)}),r}|\cF_{\Psi_r(n^{(1)},n^{(2)},n^{(3)},n^{(4)})})\\
&\times Y_{i,q_i(n^{(2)}),r}Y_{i,q_i(n^{(3)}),r}Y_{i,q_i(n^{(4)}),r}\big)
\big\vert\leq C_3a^\nu\vp_{q,p}(a/3)\nonumber
\end{eqnarray}
for some $C_3>0$ independent of $a,n^{(1)},n^{(2)},n^{(3)}$ and $n^{(4)}$.
Set $q(a)=C_1(\be_q(a/3))^\del+C_3a^\nu\vp_{q,p}(a/3)$. Then
\begin{eqnarray}\label{5.7}
&\quad E|\xi_{i,N}(s,t)|^4\leq\sum_{a=0}^\infty\sum_{(n^{(1)},n^{(2)},n^{(3)},
n^{(4)})\in\Gam_a(N,s,t)}|d_i^{(r)}(n^{(1)},n^{(2)},n^{(3)},n^{(4)})|\\
&\leq C_2\sum_{a=0}^\infty q(a)a^2(1+a^2+|\Del_N(s,t)|^2)\nonumber
\end{eqnarray}
and (\ref{5.1}) follows from (\ref{5.7}) and Assumption \ref{ass2.1}.
\end{proof}

Now tightness of each sequence of random fields $\{ \xi_{i,N}(t),\, 
t\in[0,1]^\nu\}$ follows by a slight modification of \cite{BW} (see also
Ch.5 in \cite{BS2} and Theorem 1.4.7 in \cite{Ku}), and so
the sequence of random fields $\{ \xi_N(t),\, t\in[0,1]^\nu\}$ is tight,
as well.

\section{Gaussian limits}\label{sec6}\setcounter{equation}{0}

For each fixed $t\in[0,1]^\nu$ the convergence in distribution as 
$N\to\infty$ of each $\zeta_{i,N}(t),\, i=1,...,\ell$ to corresponding
Gaussian (maybe degenerated) random variables follows from central limit 
theorems for mixingale arrays (see \cite{ML2}, \cite{Jo} and references there)
in view of the mixingale estimates of Section \ref{sec3}, convergence of
covariances obtained in Section \ref{sec4} and the tightness result derived
in Section \ref{sec5}.
Then $\xi_{i,N}(t),\, i=1,...,\ell$ also converge in distribution to the same
Gaussian random variables in view of (\ref{3.23}). Furthermore, for any
$\bt=(t^{(1)},...,t^{(j)}),\, t^{(a)}\in[0,1]^\nu,\, a=1,...,j$ and
$\bd=(d_1,...,d_j)$ set
\begin{equation}\label{6.1}
V_{i,\bt,\bd,N}(l)=\sum_{a=1}^j d_aV_{i,t^{(a)},N}(l)
\end{equation}
and
\begin{equation}\label{6.2}
\zeta_{i,\bd,N}(\bt)=\sum_{a=1}^j d_a\zeta_{i,N}(t^{(a)})=N^{-\nu/2}
\sum_{1\leq l\leq L(N)}V_{i,\bt,\bd,N}(l).
\end{equation}
Then, we obtain from (\ref{3.13}) and (\ref{3.14}) similar mixingale 
estimates also for $V_{i,\bt,\bd,N}(l)$ which via \cite{ML2} and
\cite{Jo} yields convergence in distribution as $N\to\infty$ to Gaussian
random variables of each $\zeta_{i,\bd,N}(\bt)$. This together with
(\ref{3.23}) imply that each 
\begin{equation}\label{6.3}
\xi_{i,\bd,N}(\bt)=\sum_{a=1}^j d_a\xi_{i,N}(t^{(a)})
\end{equation}
converges in distribution as $N\to\infty$ to Gaussian random variables.
Hence, finite dimensional distributions of each $\xi_{i,N}$ have Gaussian
limits which together with tightness results of Section \ref{sec5} yields
 that each $\xi_{i,N}$ converges in distribution as $N\to\infty$ to a
 Gaussian random field $\eta_i$.
 
 In fact, we can show that $(\xi_{1,N},...,\xi_{1,k})$ converges in 
 distribution as $N\to\infty$ to a $k$-dimensional Gaussian random 
 field $(\eta_1,...,\eta_k)$. Indeed, for any $\bfe=(e_1,...,e_k)\in\bbR^k$
 set
 \begin{equation}\label{6.4}
 V_{\bt,\bd,\bfe,N}(l)=\sum_{i=1}^ke_iV_{i,\bt,\bd,N}(l)\,\,\mbox{and}\,\,
 \zeta_{\bd,\bfe,N}(\bt)=\sum_{i=1}^ke_i\zeta_{i,\bd,N}(\bt).
 \end{equation}
 Then it is easy to see again by (\ref{3.13}) and (\ref{3.14}) that similar
 mixingale estimates hold true also for $V_{\bt,\bd,\bfe,N}(l)$ which via
 \cite{ML2} and \cite{Jo} provides convergence in distribution as $N\to
 \infty$ of $\zeta_{\bd,\bfe,N}$ to a Gaussian random variable which must
 have the same distribution as $\sum_{i=1}^k\sum_{a=1}^je_i\eta_i(t^{(a)})$.
 As above we conclude from (\ref{3.23}) and tightness arguments of Section
 \ref{sec5} that, in fact, $\sum_{i=1}^ke_i\xi_{i,N}$ converges in distribution
 as $N\to\infty$ to a Gaussian random field which must have the same 
 distribution as $\sum_{i=1}^ke_i\eta_i$. Thus, $(\eta_1,...,\eta_k)$ is a
 Gaussian random field and $(\xi_{1,N},...,\xi_{k,N})$ converges in
 distribution to it as $N\to\infty$. Finally, $\sum_{i=1}^k\xi_{i,N}(it)$
 converges in distribution as $N\to\infty$ to the random field
 $\sum_{i=1}^k\eta_i(it)$ which must be Gaussian as a result of the linear
  transformation (in the path space) of a Gaussian random field (see, for
  instance, \cite{Bog}, Section 2.2).
  
  Next, clearly, $\xi_N$ converges in distribution to $\xi$ given by
  (\ref{2.20}) and it remains to show that $\eta_i$ with $i\geq k+1$ are
  independent of each other and of $\eta_i$ with $i\leq k$ which will 
  imply that $\xi$ is a Gaussian random field. This can be done either
  via a modified version of Theorem 5.6 from \cite{KV} or by the following
  more direct approach. First, observe that (\ref{2.9}) implies  that
  there exists $\ve_N\to 0$ as $N\to\infty$ such that
  \begin{equation}\label{6.5}
  \lim_{N\to\infty}\min_{n,\tilde n\in\Del_N({\bf 1})\setminus\Del_N
  (\ve_N{\bf 1})}(|q_i(\tilde n)|-\max_{l<i}|q_l(n)|-|\tilde n|)=\infty
  \end{equation}
  where ${\bf 1}=(1,...,1)\in[0,1]^\nu$. We see from (\ref{6.5}) that if
  $n,\tilde n\in\Del(\ve_n\bf 1,t)$ then any $q_i(n),\, i\geq k+1$ and
  $q_j(\tilde n),\, j\ne i$ are widely separated. Next, it follows from 
  Lemma \ref{lem4.2} that for each $i$,
  \begin{equation}\label{6.6}
  \xi_{i,N}(\ve_N{\bf 1})\to 0\,\,\mbox{in probability as}\,\, 
  N\to\infty,
  \end{equation}
  and so in all our arguments the sum over $\Del_N(\ve_N{\bf 1})$ can be
   disregarded. 
  
  In order to use (\ref{6.5}) and (\ref{6.6}) introduce 
  $j_N=\min\{ j:\, a(j)\geq\ve_NN\}$ with $a(j)$ defined in (\ref{3.6}).
  For $\bt=(t^{(1)},...,t^{(j)}),\, t^{(a)}\in[\ve_N,1]^\nu,\, a=1,...,j$,
  $\bd=(d_1,...,d_j)$ and $\bfe=(e_{k+1},...,e_\ell)$ define for $l=1,2,...,
  L(N)$,
  \begin{equation}\label{6.7}
  \tilde V_{\bt,\bd,{\bfe},N}(l)=\sum_{a=1}^j\sum_{i=k+1}^\ell 
  d_ae_iV_{i,t^{(a)},N}(l)
  \end{equation}
  as in (\ref{6.1}) and (\ref{6.4}). On the other hand, for $(i-k)L(N)+j_N
  \leq l\leq (i+1-k)L(N)$ and $i\geq k+1$ we set
  \begin{equation}\label{6.8}
  \tilde V_{\bt,\bd,{\bfe},N}(l)=\sum_{a=1}^j d_ae_iV_{i,t^{(a)},N}
  (l-(i-k)L(N)
  \end{equation}
  setting $\tilde V_{\bt,\bd,\bfe,N}(l)$ to be zero if it is not defined 
  by one of the above and assuming that $t^{(a)}>\ve_N$ for all $a$. 
  It is easy to see from (\ref{2.8})--(\ref{2.9}), (\ref{6.5}), (\ref{6.6})
  and mixingale estimates of Section \ref{sec3} that the sequence  
  $(\tilde V_{\bt,\bd,\bfe,N}(l)-E\tilde V_{\bt,\bd,\bfe,N}(l))N^{-\nu/2},\,
  l=j_N,...,\ell L(N)$ forms a mixingale 
  array with the corresponding $\sig$-algebras $\cG_l=\cG_l^{(i)}$ for 
  $i\leq k$, $\cG_{l+(i-k)L(N)}=\cG_l^{(i)}$ for $i\geq k+1$ and estimates
  similar to (\ref{3.13+++}). Observe that
  \[
  \tilde\zeta_{\bd,{\bfe},N}(\bt)=N^{\nu/2}\sum_{j_N\leq l\leq L(N)}
  \sum_{i=1}^\ell e_iV_{i,\bt,
  \bd,N}(l)=N^{-\nu/2}\sum_{1\leq l\leq\ell L(N)}\tilde V_{\bt,\bd,\bfe,N}(l).
  \]
  Taking into account (\ref{6.6}), mixingale estimates of Section \ref{sec3}
  and convergence of covariances results of Section \ref{sec4}
  and tightness arguments of Section \ref{sec5}
  we conclude by mixingale limit theorems from \cite{ML2} and \cite{Jo} that
  $\tilde\zeta_{\bd,{\bfe},N}(\bt)-E\tilde\zeta_{\bd,{\bfe},N}(\bt)$ converges
  in distribution as $N\to\infty$ to a Gaussian random variable while similarly
  to the end of Section \ref{sec3} we see that 
  $\tilde\zeta_{\bd,{\bfe},N}(\bt)$ converges to the same limit.
  It follows from here in view of (\ref{3.23}) and (\ref{6.6})
  that each linear combination $\sum_{a=1}^j\sum_{i=1}^\ell 
  d_ae_i\xi_{i,N}(t^{(a)})$ converges in distribution as $N\to\infty$ to 
  some Gaussian random variable which then must be $\sum_{a=1}^j\sum_{i=1}^\ell 
  d_ae_i\eta_i(t^{(a)})$. Thus $(\eta_1,\eta_2,...,\eta_\ell)$ is an
  $\ell$-dimensional Gaussian random field. Invoking again the linear 
  transformation argument from Section 2.2 of \cite{Bog} we conclude 
   that the field $\xi(t)$ given by (\ref{2.20}) is a Gaussian one
  and taking into account the vanishing limiting covariances result
  (\ref{4.18}) we obtain also independency of $\eta_i,\, i\geq k+1$ of
  each other and of $\eta_i$'s with $i\leq k$.
  \qed
  
 \bibliography{matz_nonarticles,matz_articles}
\bibliographystyle{alpha}

\end{document}